\newtheorem{thm}{Theorem}[section]
\newtheorem{obs}[thm]{Observation}
\newtheorem{p}[thm]{Problem}
\theoremstyle{definition}
\newtheorem{defi}[thm]{Definition}
\newtheorem*{defi*}{Definition}
\newtheorem{examp}[thm]{Example}
\newtheorem{constr}[thm]{Construction}
\Crefname{constr}{Construction}{Constructions}
\Crefname{thm}{Theorem}{Theorems}
\Crefname{thmenumi}{Theorem}{Theorems}
    \setlist[enumerate,1]{
        label={\textit{(\roman*)}},
        ref={\thethm.(\roman*)}
    }%
\newcommand*{\myproofname}{Proof}
\crefname{algorithm}{Algorithm}{Algorithms}
\newcommand{\V}{V}
\newcommand{\E}{E}
\newcommand{\Edel}{\E_{\textrm{del}}}
\newcommand{\G}{G}
\newcommand{\Greg}{\G_{\textrm{reg}}}
\newcommand{\Gbireg}{\G_{\textrm{bireg}}}
\newcommand{\T}{T}
\newcommand{\NMTree}[4]{\T_{#1,#2,#3}^{(#4)}}
\newcommand{\NMTreeFam}[4]{\mathcal{T}_{#1,#2,#3}^{(#4)}}
\newcommand{\rg}{(r,g)}
\newcommand{\rmg}{(\{r,m\};g)}
\newcommand{\rmgIn}[3]{(\{#1,#2\};#3)}
\newcommand{\rmgnFam}[4]{\mathcal{G}_{#1,#2,#3,#4}}
\newcommand{\biregEven}{(\{r,m\};2t)}
\newcommand{\size}[1]{\lvert #1 \rvert}
\newcommand{\ceil}[1]{\left\lceil #1 \right\rceil}
\newcommand{\powerSum}[3]{\ensuremath{\sum_{i=#1}^{#2} (#3)^i}}
\newcommand{\roott}{\mathsf{w}}
\newcommand{\vroot}{\ensuremath{v_{\roott}}}
\newcommand{\uroot}{\ensuremath{u_{\roott}}}
\newcommand{\edge}[2]{\{ #1, #2 \}} 
\renewcommand{\degree}[1]{\textrm{deg}({#1})}
\newcommand{\dist}[2]{\textrm{dist}(#1,#2)}
\newcommand{\distwG}[3]{\textrm{dist}_{#1}(#2,#3)}
\newcommand{\dmin}{d_{\text{min}}}
\newcommand{\floor}[1]{\left\lfloor #1 \right\rfloor}
\newcommand{\cyc}{\mathcal{C}}
\newcommand{\comb}[1]{\left( #1 \right)}
\newcommand{\setCombs}{\mathfrak{P}}
\newcommand{\cmax}{\placement_{\textrm{max}}}
\newcommand{\placement}{\mathfrak{p}}
\newcommand{\algoName}[1]{\emph{#1}}
\newcommand{\refAlgoLine}[1]{{\footnotesize{\textbf{\ref{#1}}}}}
\newcommand{\nauty}{\texttt{nauty}}
\newcommand{\geng}{\texttt{geng}}
\newcommand{\multigraph}{\texttt{multigraph}}
\newcommand{\biregGen}{\texttt{biregGen}}
\newcommand{\numImprovedUpBounds}{49}
\newcommand{\numExhLists}{24}
\newcommand{\numImprovedUpBoundsThm}{73}
\newcommand{\numImprovedUpBoundsTotal}{122}
\title{Computational methods for finding bi-regular cages}
\newcommand{\True}{\AlCapSty{true}}
\newcommand{\False}{\AlCapSty{false}}
\providecommand{\keywords}[1]
{
  \small	
  \textbf{\textit{Keywords---}} #1
}
\providecommand{\msc}[1]
{
  \small	
  \textbf{\textit{Math. Subj. Class. (2020)---}} #1
}
\date{}
\author{
Jan Goedgebeur \thanks{Department of Computer Science, KU Leuven Campus Kulak-Kortrijk, Etienne Sabbelaan 53, 8500 Kortrijk, Belgium. 
\protect\href{mailto:jan.goedgebeur@kuleuven.be}{\protect\nolinkurl{jan.goedgebeur@kuleuven.be}},
\protect\href{mailto:jorik.jooken@kuleuven.be}{\protect\nolinkurl{jorik.jooken@kuleuven.be}},
 and \protect\href{mailto:tibo.vandeneede@kuleuven.be}{\protect\nolinkurl{tibo.vandeneede@kuleuven.be}}} 
 \thanks{Department of Applied Mathematics, Computer Science and Statistics, Ghent University, Krijgslaan 281-S9, 9000 Ghent, Belgium.}
\and Jorik Jooken
 \footnotemark[1] 
\and Tibo Van den Eede
 \footnotemark[1]
}
\begin{document}
\maketitle

\begin{abstract}
An \textit{$\rmg$-graph} is a (simple, undirected) graph of girth $g\geq3$ with vertices of degrees $r$ and $m$ where $2 \leq r < m$ . Given $r,m,g$, we seek the $\rmg$-graphs of minimum order, called \textit{$\rmg$-cages} or \textit{bi-regular cages}, whose order is denoted by $n\rmg$. In this paper, we use computational methods for finding $\rmg$-graphs of small order. Firstly, we present an exhaustive generation algorithm, which leads to -- previously unknown -- exhaustive lists of $\rmg$-cages for $\numExhLists$ different triples $(r,m,g)$. This also leads to the improvement of the lower bound of $n\rmgIn{4}{5}{7}$ from 66 to 69. Secondly, we improve $\numImprovedUpBounds$ upper bounds of $n\rmg$ 
based on constructions that start from $r$-regular graphs. Lastly, we generalize a theorem by Aguilar, Araujo-Pardo and Berman [arXiv:2305.03290, 2023], leading to $\numImprovedUpBoundsThm$ additional improved upper bounds. 
    
\end{abstract}

\keywords{Graph algorithms, Extremal problems, Cage Problem, Degree sequences.}

\vspace*{1pt}

\msc{05C07, 05C35, 05C85, 68R10, 90C35}

\section{Introduction}

The Cage Problem is a renowned problem in extremal graph theory and revolves around finding (the order of) a specific type of graphs called \textit{cages}. An \textit{$\rg$-graph} is a simple undirected $r$-regular graph of girth $g$, where the latter means that its shortest cycle has length $g$. An \textit{$\rg$-cage}, or simply \textit{cage}, is an $\rg$-graph of minimum order. The order of an $\rg$-cage is denoted by $n\rg$. Cages were first introduced in 1947 by Tutte~\cite{Tutte1947}. In 1963 Sachs proved that for all $r \geq 2$ and $g \geq 3$ an $\rg$-graph always exists~\cite{Sachs1963}. Nevertheless, $\rg$-cages were only determined for a limited number of pairs $\rg$ as this is a notoriously hard problem. For pairs $\rg$ where $\rg$-cages are unknown, only lower and upper bounds for $n\rg$ have been established. For an overview of the known cages and bounds, we refer to \cite{CageSurvey}. We now mention a well-known lower bound, called the \textit{Moore bound} $M\rg$, for $n\rg$ (which holds for any $r \geq 2$ and $g \geq 3$).
\begin{equation}\label{eq:Mbound}
	n\rg \geq M(r,g) =
	\begin{cases}
		 1 + r \powerSum{0}{t-1}{r-1} & \text{for} \: g=2t+1 \\
		 2 \powerSum{0}{t-1}{r-1} & \text{for} \: g=2t . \\
	\end{cases}
\end{equation}
This bound follows from constructing a tree, called the \textit{Moore tree}, with all internal vertices of degree $r$ until depth $\floor{(g-1)/2}$. For odd $g$, the tree has a vertex as root and for even $g$ we consider an edge as the root. This tree occurs as a subgraph of any $(r,g)$-graph.

Due to the difficulty of the original Cage Problem, researchers started looking at variants of cages. One of the most studied variants is the bi-regular cage. 
An \textit{$\rmg$-graph} is a graph with degree set $\{r,m\}$ where $r < m$ and girth $g$. Analogously, an \textit{$\rmg$-cage} or \textit{bi-regular cage} is an $\rmg$-graph of minimum order and this order is denoted by $n\rmg$. The existence of $\rmg$-graphs for $2\leq r<m$ and $g \geq 3$ was proven in 1981 by Chartrand, Gould and Kapoor~\cite{Chartrand1981}. 
Furthermore, there is a lower bound $M\rmg$ for $n\rmg$ for any $2 \leq r < m$ and $g \geq 3$ similar to the Moore bound for $n\rg$. The only difference is that we replace a vertex of degree $r$ by a vertex of degree $m$ in the root of the Moore tree. 

\begin{equation}\label{eq:Mboundbireg}
	n\rmg \geq M\rmg =
	\begin{cases}
		1 + m \powerSum{0}{t-1}{r-1} & \text{for} \: g=2t+1 \\
		1 + m \powerSum{0}{t-2}{r-1} + (r-1)^{t-1} & \text{for} \: g=2t . \\
	\end{cases}
\end{equation}

We note that $\rmgIn{r}{m}{3}$ \cite{Kapoor1977}, $\rmgIn{2}{m}{g}$ and $\rmgIn{r}{m}{4}$ \cite{Chartrand1981} are considered to be the easier cases, as there bi-regular cages are known for all $r,m,g$. A considerable amount of research has been done on finding bi-regular cages and improving lower and upper bounds for $n\rmg$, including~\cite{biregGirth5, semiCubic2023, Aurajo-Pardo2007, biregGirth8, ConstructionsBireg, biregEven, semiRegCages, Dn-cages, biregOdd, Hanson1992, Yang2003}. In particular, Exoo and Jacjay proved in \cite{biregOdd} that for every $r\geq3$ and odd $g\geq3$ there exists an $m_0$ such that for every even $m\geq m_0$, it holds that $n\rmg=M\rmg$, thereby solving the problem asymptotically for odd girth. Moreover, when $r$ is odd, the restriction on the parity of $m$ can be removed. However, $m_0$ is only known for a very limited number of pairs $(r,m)$. On top of that, this result leaves $n\rmg$ for $m<m_0$ unresolved. Besides this, in contrast with odd girth, it holds that  $n\rmg>M\rmg$ for even $g \geq 6$ \cite{Aurajo-Pardo2007,Yang2003}. Hence, many orders of bi-regular cages are still unknown and in even more cases the complete lists of such graphs have not been determined.

Furthermore, most of the papers on bi-regular cages do not mention the use of computers. This contrasts with the original Cage Problem where computer methods led to finding multiple $\rg$-cages and improving lower and upper bounds for $n\rg$ in, among others, \cite{BRAY2000,Brink1995,Conder1997SmallTG,Exoo2002AST,Exoo2011,McKay1998}. Motivated by these results, we introduce computer(-aided) methods in this paper, leading to -- previously unknown -- exhaustive lists of bi-regular cages for $\numExhLists$ different triples $(r,m,g)$ and improving lower and upper bounds for $\numImprovedUpBoundsTotal$ triples within the ranges we consider.

\subsection{Outline of the paper}\label{subsec:outline}

The paper consists of three main sections, each presenting a different method for finding $\rmg$-graphs and describing its results. The code and data related to the computer methods we developed for this paper is available at \url{https://github.com/tiboat/biregGirthGraphs} \cite{repo}. \cref{sec:exh_gen} presents a backtracking algorithm to exhaustively generate $\rmg$-graphs of a given order. The pruning rules which make this algorithm efficient are obtained from theoretical results which we also present in this section. 
With this algorithm we generated $\numExhLists$ exhaustive lists of $\rmg$-cages and improved the lower bound of $n\rmgIn{4}{5}{7}$ from 66 to 69. \cref{sec:constr} describes constructions for $\rmg$-graphs based on appropriate modification of $r$-regular graphs. Using existing lists of $r$-regular graphs as seeds we were able to improve $\numImprovedUpBounds$ upper bounds on $n\rmg$. Next, \cref{sec:gen_up_bound} generalizes a construction of $\rmg$-graphs out of $\rg$-graphs by Aguilar, Araujo-Pardo and Berman \cite{semiCubic2023}, also leading to improved upper bounds. \cref{sec:conc} presents some concluding remarks and future work.

\cref{sec:sanity} describes extra steps that we took to ensure the correctness of the implementation of the exhaustive generation algorithm presented in \cref{sec:exh_gen}.
\cref{sec:tables} summarizes the best known lower and upper bounds for $n\rmg$ that we found in the literature, augmented with the bounds we were able to improve in this paper. The summary limits $r,m,g$ to the ranges for which we were able to improve bounds with our methods from \cref{sec:exh_gen,sec:constr}.

\subsection{Notation and definitions}\label{subsec:not&def}

The set of vertices and set of edges of a graph $\G$ are denoted by $\V(\G)$ and $\E(\G)$, respectively. An edge between two vertices $v$ and $w$ will be denoted by $\edge{v}{w}$. The degree of a vertex $v$ is denoted by $\degree{v}$. We call a vertex $v$ \textit{isolated} if $\degree{v} = 0$. The girth of a graph $\G$, which is the length of the shortest cycle in $\G$, will be denoted by $g(\G)$. The distance between two vertices $v_1,v_2$ in graph $\G$ is denoted by $\distwG{\G}{v_1}{v_2}$ or simply $\dist{v_1}{v_2}$ if the graph in which we define this distance is clear from the context. We define the distance between a vertex $u$ and an edge $\edge{v}{w}$ to be $\dist{u}{\edge{v}{w}} = \textrm{min} \left( \dist{u}{v}, \dist{u}{w} \right)$ and define the distance between two edges $\edge{v_1}{w_1}, \edge{v_2}{w_2}$ to be $\dist{\edge{v_1}{w_1}}{\edge{v_2}{w_2}} = \textrm{min} \left( \dist{v_1}{\edge{v_2}{w_2}}, \dist{w_1}{\edge{v_2}{w_2}} \right)$. In this paper a root $\roott$ of a tree can either be a vertex or an edge. Suppose $T$ is a tree with root $\roott$, then we denote the set of vertices at distance $d$ from $\roott$ as $L_d$. We say that the vertices of $L_d$ are at \textit{level} $d$. The subset $L_{d,k}$ of $L_d$ denotes the set of vertices at distance $d$ from $\roott$ which have degree $k$. The set of vertices of a graph of degree $k$ are denoted by $V_k$. Finally, we denote the set of pairwise non-isomorphic $\rmg$-graphs of order $n$ as $\rmgnFam{r}{m}{g}{n}$.

\section[Exhaustive generation of ({r,m};g)-graphs]{Exhaustive generation of $\rmg$-graphs}\label{sec:exh_gen}

To exhaustively generate $\rmg$-graphs we make use of a backtracking algorithm. This algorithm is based on the one for $\rg$-graphs by McKay, Myrvold and Nadon~\cite{McKay1998}, extended with the pruning rule from~\cite{Exoo2011}. To generate $\rmg$-graphs we made several adaptations and found new pruning rules, which will be explained in this section. 
The algorithm from~\cite{McKay1998} starts with the Moore tree and recursively adds edges to it. For $\rmg$-graphs we start with what we will call a \textit{bi-regular Moore tree}. The order of this tree corresponds to the bi-regular Moore bound as mentioned in \cref{eq:Mboundbireg}. The main algorithmic difficulty in comparison to $\rg$-graphs is (of course) that a vertex can be of either degree $r$ or $m$ instead of only one degree $r$. The new pruning rules limit which vertices cannot be of degree $m$ and hence need to be of degree $r$. Before discussing the pruning rules, we first discuss a refined lower bound for $n\rmg$ in \cref{subsec:refined_low_bound}, which will be useful for the pruning rules as described in \cref{subsec:distDegM,subsec:mPlacement}. Next, we present the backtracking algorithm in \cref{sec:biregBacktrackAlgo} and lastly, we discuss the results we obtained with the algorithm in \cref{subsec:exh_gen_exp}.

\subsection[Refined lower bound for n({r,m};g)]{Refined lower bound for $n\rmg$}\label{subsec:refined_low_bound}

First, we explicitly define a bi-regular Moore tree, which we will abbreviate as BMT.

\begin{defi}[Bi-regular Moore tree (BMT)]\label{def:mTree}
	A tree with root  $\roott$ is a bi-regular Moore tree $\NMTree{r}{m}{d}{\roott}$ if 
	\begin{itemize}[noitemsep,topsep=0pt]
		\item $r < m$,
		\item $\degree{\vroot} = m$ with $\roott = \vroot$ or $\roott = \edge{\uroot}{\vroot}$,
		\item all internal vertices (non-leaves) have degree $r$ or $m$, and 
		\item the distance between $\roott$ and all leaves equals $d$.
	\end{itemize}
\end{defi}

\begin{examp}[Bi-regular Moore tree (BMT)]\label{ex:bmt}
	\cref{fig:m_plaatsing} shows a BMT $\NMTree{3}{4}{3}{\vroot}$ with root $\vroot$ of degree $4$.\\
	
	\centering
	\includegraphics[page=3, scale=1.1]{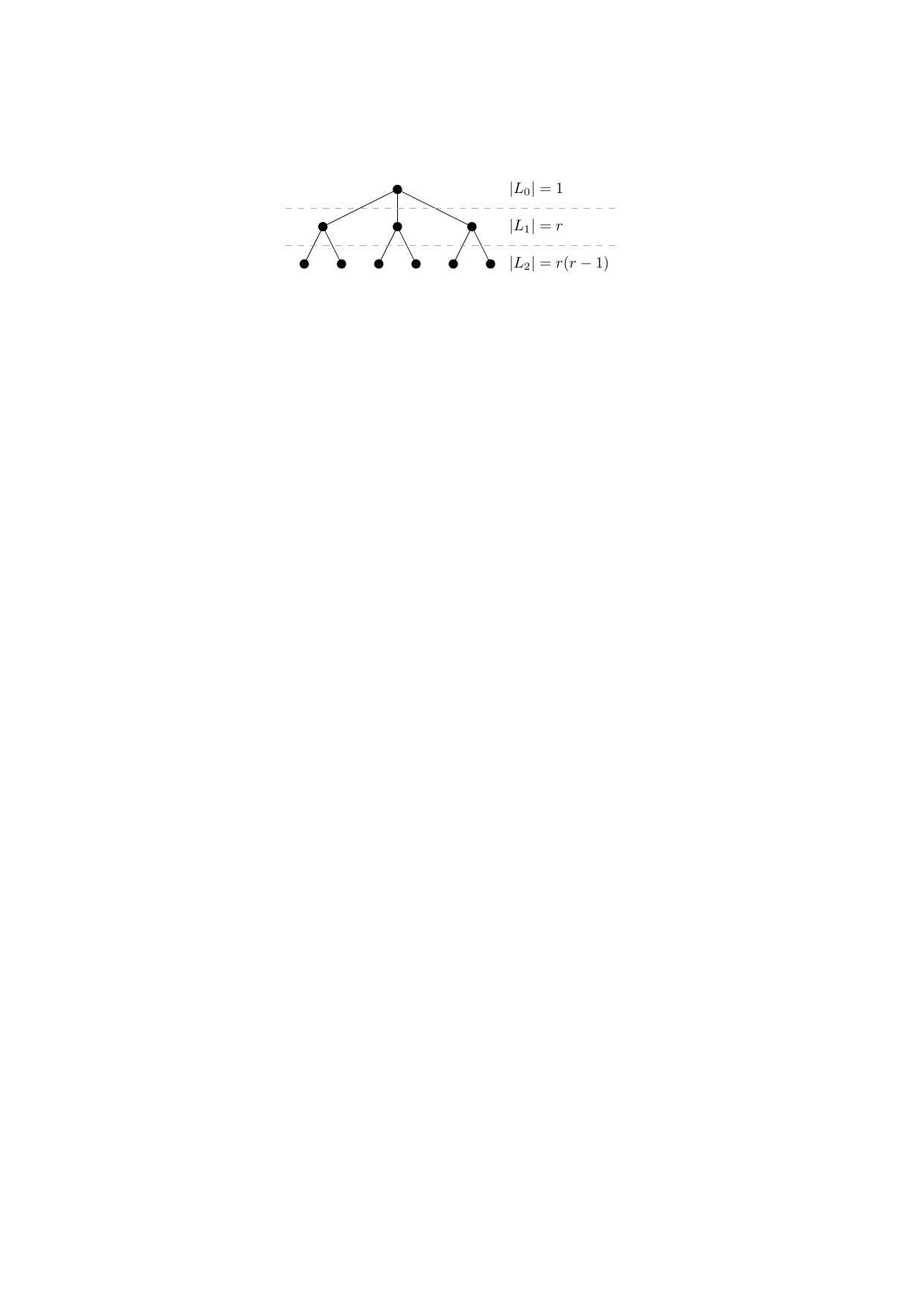}
	\captionof{figure}{A BMT $\NMTree{3}{4}{3}{\vroot}$.}
	\label{fig:m_plaatsing}
\end{examp}

In case only one vertex in the root of the BMT is of degree $m$, then the BMT corresponds to the tree associated with the bi-regular Moore bound (\cref{eq:Mboundbireg}) where $d=t$, $\roott = \edge{\uroot}{\vroot}$ for $g=2t$ and $d=t-1$, $\roott = \vroot$ for $g=2t-1$. In this subsection we present a lower bound on the amount of vertices of an $\rmg$-graph given the amount of vertices of degree $m$ on each level of the bi-regular Moore tree. With the notation that we introduced this means that we assume we know $\size{L_{i,m}}$ for $i \in \{0,\ldots,d-1\}$. The fundamental, simple observation that will lead to the lower bound is the following.

\begin{obs}\label{obs:baseMtree}
	Suppose $\NMTree{r}{m}{d}{\roott}$ is a BMT and $3 \leq r < m$. Then for all $j \in \{2, \ldots, d\}$ it holds that
	\begin{equation*}
		\size{L_j} = (m-r) \size{L_{j-1,m}} + (r-1) \size{L_{j-1}}.
	\end{equation*}
\end{obs}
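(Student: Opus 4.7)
The plan is to count the number of edges between consecutive levels in the BMT and use the fact that, in a tree, every vertex except the root has a unique ``parent.'' I would set up the proof as a straightforward counting argument, organised in three short steps.

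First, I would establish a local child-count lemma: for every internal vertex $v$ at level $\ell$ with $1 \leq \ell < d$, the number of neighbours of $v$ lying at level $\ell+1$ equals $\degree{v} - 1$. The key observation is that each such $v$ has exactly one neighbour at level $\ell - 1$ (its parent), and all remaining neighbours must lie at level $\ell+1$ by the tree structure (no cycles forbid level $\ell$) and the BMT requirement that all leaves sit at distance $d$ from $\roott$. A small point to verify is that this also works in the edge-root case $\roott = \edge{\uroot}{\vroot}$, where $L_0 = \{\uroot, \vroot\}$: any vertex at level $1$ is adjacent to exactly one of $\uroot, \vroot$, because otherwise we would obtain a cycle of length $3$ with the edge $\edge{\uroot}{\vroot}$.

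Second, I would apply this lemma at level $j-1$, which is internal since $j-1 \leq d-1$, and split $L_{j-1}$ into $L_{j-1,r}$ and $L_{j-1,m}$. Counting the children of each vertex at level $j-1$ gives
\begin{equation*}
    \size{L_j} = (r-1)\size{L_{j-1,r}} + (m-1)\size{L_{j-1,m}}.
\end{equation*}
Then, using that every internal vertex of the BMT has degree $r$ or $m$ so $\size{L_{j-1,r}} = \size{L_{j-1}} - \size{L_{j-1,m}}$, I would substitute and rearrange to obtain
\begin{equation*}
    \size{L_j} = (r-1)\size{L_{j-1}} + (m-r)\size{L_{j-1,m}},
\end{equation*}
which is the stated identity.

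There is no real obstacle here: the only point requiring a moment's care is handling the edge-root case in step one, so that the ``unique parent'' claim is genuinely valid for level-$1$ vertices. Everything else is a direct consequence of the definition of the BMT and elementary tree counting.
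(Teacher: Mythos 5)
Your proposal is correct and follows essentially the same argument as the paper: both count $\size{L_j}$ as the number of edges from $L_{j-1}$ down to $L_j$, using that each vertex of $L_{j-1}$ has one parent and hence $\degree{v}-1$ children, and then split according to whether the degree is $r$ or $m$. Your version is merely more explicit (the parent-uniqueness lemma, the edge-root case, and the intermediate form $(r-1)\size{L_{j-1,r}}+(m-1)\size{L_{j-1,m}}$), which the paper leaves implicit.
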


\begin{proof}
    The amount of vertices at level $j$, $\size{L_j}$, is equal to the amount of edges between level $j-1$ and level $j$. There are at least $r-1$ edges for each vertex in $L_{j-1}$. For each vertex of degree $m$ in $L_{j-1}$ we have $m-r$ edges on top of that.
\end{proof}

By rewriting $\size{L_j}$ in a non-recursive form (using \cref{obs:baseMtree}) and summing the amount of vertices on each level we obtain the more refined lower bound, presented in \cref{thm:lowBoundMtree}. A detailed derivation can be found in the Master's thesis of the third author~\cite[Section~3.1]{thesis}. 

\begin{thm}\label{thm:lowBoundMtree}
	Suppose $\NMTree{r}{m}{d}{\roott}$ is a BMT, $3 \leq r < m$ and $d \geq 2$. Then
	\begin{equation*}
		\size{\V(\NMTree{r}{m}{d}{\roott})} = \size{L_0} + \size{L_1} \powerSum{0}{d-1}{r-1} + (m-r) \left(  \sum_{j=1}^{d-1}  \size{L_{j,m}} \powerSum{0}{d-1-j}{r-1} \right).
	\end{equation*}
\end{thm}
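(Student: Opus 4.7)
The plan is to unroll the recursion given by \cref{obs:baseMtree} to obtain a non-recursive expression for $|L_j|$, then sum over all levels $j \in \{0, 1, \ldots, d\}$ and rearrange the resulting double sum.

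First I would establish, by induction on $j \geq 1$, the closed form
\[
|L_j| = (r-1)^{j-1} |L_1| + (m-r) \sum_{k=1}^{j-1} (r-1)^{j-1-k} |L_{k,m}|.
\]
The base case $j=1$ is trivial (the sum is empty). For the inductive step, the recurrence $|L_{j+1}| = (r-1)|L_j| + (m-r)|L_{j,m}|$ from \cref{obs:baseMtree} is applied to the expression for $|L_j|$ supplied by the induction hypothesis: multiplying through by $(r-1)$ shifts the exponents by one and the extra term $(m-r)|L_{j,m}|$ extends the summation to $k=j$, yielding the claim for $j+1$.

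Next I would write $|V(\NMTree{r}{m}{d}{\roott})| = |L_0| + \sum_{j=1}^{d} |L_j|$ and substitute the closed form. The first piece becomes $|L_0| + |L_1| \sum_{j=1}^{d}(r-1)^{j-1}$, whose inner sum is precisely $\powerSum{0}{d-1}{r-1}$. The remaining piece is a double sum
\[
(m-r) \sum_{j=2}^{d} \sum_{k=1}^{j-1} (r-1)^{j-1-k} |L_{k,m}|,
\]
which I would reorganize by interchanging the order of summation. Fixing $k$, the variable $j$ ranges over $\{k+1, \ldots, d\}$; substituting $i = j-1-k$ turns the inner sum into $\sum_{i=0}^{d-1-k} (r-1)^i$, and after renaming $k$ to $j$ one recovers the term $(m-r)\sum_{j=1}^{d-1} |L_{j,m}| \powerSum{0}{d-1-j}{r-1}$ appearing in the theorem.

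The only subtle step is the interchange of summation, where one must keep careful track of the index bounds, but it is a routine manipulation; the remainder is a straightforward induction and algebraic substitution. Since no structural properties of the tree beyond the level recurrence are used, the proof should go through cleanly for all $d \geq 2$ and $3 \leq r < m$.
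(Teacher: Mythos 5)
Your proposal is correct and follows exactly the route the paper indicates: it unrolls the recurrence of \cref{obs:baseMtree} into a closed form for $\size{L_j}$, sums over the levels $0$ through $d$, and interchanges the order of summation to arrive at the stated formula (the paper itself only sketches this and defers the details to the third author's thesis). The induction, the index bookkeeping in the exchange of sums, and the final substitution all check out.
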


\cref{thm:lowBoundMtree} thus gives the order of a BMT, given the amount of vertices of degree $m$ on each level. 
The exhaustive generation algorithm will generate $\rmg$-graphs for a specific order $n$. With \cref{thm:lowBoundMtree} we can derive which combinations of vertices of degree $m$ on the levels of the BMT lead to the BMT's order being larger than $n$. Hence, we know these combinations are impossible, which will be useful for the pruning rules in the following two subsections.

\subsection[Pruning rule: distance between vertices of degree m]{Pruning rule: distance between vertices of degree $m$}\label{subsec:distDegM}
Araujo-Pardo, Exoo and Jacjay proved lower bounds for $n\biregEven$ by making a distinction based on the amount of vertices of degree $m$, denoted by $\size{\V_m}$, at a minimum distance $\dmin$ from each other~\cite{biregEven}.
Among others, they provide lower bounds for $n\biregEven$ with $3 \leq r < m$, $t>3$, $\size{\V_m} \geq 2$ under different cases for $\dmin$, presented in \cref{thm:distAll}.

\begin{thm}[{\cite[Lemmas~3.2~and~3.3]{biregEven}} and {\cite[Case~2 of Theorem~2.3]{Aurajo-Pardo2007}}]\label{thm:distAll}
If $t \geq 4$, $3 \leq r < m$, and $\G$ is an $\biregEven$-graph with $\size{\V_m} \geq 2$. Then the following hold.

\begin{enumerate}
    \item If $\dmin = 1$, then
	\begin{equation*}
		\size{\V(\G)} \geq 2 + 2(m-1) \dfrac{(r-1)^{t-1}-1}{r-2}.
	\end{equation*}\label{thm:dist1}

    \item If $2 \leq \dmin \leq t-1$, then 
	\begin{equation*}
		\size{\V(\G)} \geq 1 + m \left( \powerSum{0}{t-2}{r-1}\right) + (m-r) \left( \powerSum{0}{t-1-\dmin}{r-1} \right) + (r-1)^{t-1} .
	\end{equation*}\label{thm:dist2}

    \item If $\dmin \geq 3$, then
	\begin{equation*}
		\size{\V(\G)} \geq 1 + m \dfrac{(r-1)^{t-1}-1}{r-2} + (r-1)^{t-1} + \dfrac{(m-r)(r-2)(r-1)^{t-2}}{r} .
	\end{equation*}\label{thm:dist3}
\end{enumerate}
\end{thm}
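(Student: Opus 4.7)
The plan is to handle each case by a BFS-tree counting argument rooted at a degree-$m$ vertex (or an edge incident to one), exploiting the fact that the girth $2t$ guarantees no vertex repetitions up to sufficient depth.

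For Case~(i) ($\dmin = 1$), I would pick two adjacent degree-$m$ vertices $v_1, v_2$ and root a BMT at the edge $\edge{v_1}{v_2}$. The two root endpoints contribute $2$ vertices, and since each has $m-1$ further neighbors, level $1$ contains at least $2(m-1)$ vertices. The girth $2t$ ensures that the subtrees hanging off $v_1$ and $v_2$ (going away from each other) cannot intersect within depth $t-1$, and every non-root vertex has degree at least $r$; hence level $i$ contains at least $2(m-1)(r-1)^{i-1}$ vertices for $1 \leq i \leq t-1$. Summing the geometric series yields $2 + 2(m-1)\frac{(r-1)^{t-1}-1}{r-2}$.

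For Case~(ii) ($2 \leq \dmin \leq t-1$), I would fix $v_1 \in \V_m$ together with a witness $v_2 \in \V_m$ at distance exactly $\dmin$, and root the BMT at an edge $\edge{v_1}{N}$ where $N$ is a degree-$r$ neighbor of $v_1$ not on any geodesic from $v_1$ to $v_2$ (possible since $\dmin \geq 2$). The usual bi-regular Moore count for this root edge gives a baseline of $1 + m \sum_{i=0}^{t-2}(r-1)^i + (r-1)^{t-1}$ vertices up to level $t-1$. The vertex $v_2$ sits at level $\dmin$ of this tree and has degree $m$, so it contributes $(m-1) - (r-1) = m-r$ extra children at level $\dmin+1$. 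Each such extra roots an $(r-1)$-branching subtree which, by the girth constraint, cannot collide with the rest of the BMT. Counting these extras at levels $\dmin+1, \dots, t-1$ together with the fringe level~$t$ contribution (handled by the same edge-root versus vertex-root bookkeeping that produces the trailing $(r-1)^{t-1}$ in the Moore bound itself) yields the extra term $(m-r)\sum_{i=0}^{t-1-\dmin}(r-1)^i$.

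For Case~(iii) ($\dmin \geq 3$), the argument is more delicate and is the main obstacle. The factor $(r-2)/r$ appearing in $\frac{(m-r)(r-2)(r-1)^{t-2}}{r}$ is characteristic of an averaging or double-counting argument over $\V_m$: I would sum, over every $v \in \V_m$, the ``extras'' accrued in the BMT rooted at $v$, and then divide by a multiplicity bound controlling how often a given vertex can appear as an extra across distinct such BMTs. The hypotheses $\dmin \geq 3$ (no two degree-$m$ vertices share a neighbor) and $t \geq 4$ (sufficient girth-room) together force this multiplicity to be small enough that, after rearrangement, one recovers the stated bound. Pinning down the exact multiplicity and carefully controlling the boundary-level contributions is the technical core of the proof; by contrast, Cases~(i) and~(ii) are essentially direct BFS counts, so the real combinatorial subtlety sits in Case~(iii).
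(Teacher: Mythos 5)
The paper does not actually prove this theorem: it is imported from the literature (Lemmas~3.2 and 3.3 of the cited work of Araujo-Pardo, Exoo and Jajcay, and Case~2 of Theorem~2.3 of Araujo-Pardo et al.), and the paper only remarks afterwards that its own \cref{thm:lowBoundMtree} subsumes cases \textit{(i)} and \textit{(ii)}. So your proposal must be judged on its own merits. Case~\textit{(i)} is a correct, standard edge-rooted Moore count. Case~\textit{(ii)}, however, contains an off-by-one gap caused by your choice of root: by insisting that $N$ lie \emph{off} every $v_1$--$v_2$ geodesic, you place $v_2$ in the shallow subtree of the root edge (the side that is only certified out to distance $t-1$ from $v_1$), so the $m-r$ extra children of $v_2$ and their descendants are guaranteed pairwise distinct only down to distance $t-1$ from $v_1$. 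This yields the weaker extra term $(m-r)\sum_{i=0}^{t-2-\dmin}(r-1)^i$, one level short of the claimed $(m-r)\sum_{i=0}^{t-1-\dmin}(r-1)^i$. The ``fringe level $t$'' you invoke cannot be borrowed for these vertices: a depth-$t$ descendant of $v_2$ on the $v_1$ side could coincide with a counted depth-$t$ vertex in $N$'s subtree via a cycle of length exactly $2t$, which the girth hypothesis permits. The fix is to make the opposite choice: root at an edge $\edge{v_1}{u}$ with $u$ \emph{on} a shortest $v_1$--$v_2$ path, so that $v_2$ sits at depth $\dmin-1$ inside the subtree of $u$, which legitimately extends to depth $t-1$ from $u$ (depth $t$ from $v_1$); the extras then contribute exactly $(m-r)\sum_{i=0}^{t-1-\dmin}(r-1)^i$ and the remaining terms collapse to $1+m\sum_{i=0}^{t-2}(r-1)^i+(r-1)^{t-1}$ as required.

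Case~\textit{(iii)} is the more serious problem: you give no proof, only a conjecture that an average over all of $\V_m$ with a multiplicity bound will produce the factor $(r-2)/r$. That is a placeholder, not an argument, and it is not clearly the right mechanism: the denominator $r$ in $\frac{(m-r)(r-2)(r-1)^{t-2}}{r}$ is more naturally explained by counting edges between levels $t-1$ and $t$ of a single BMT rooted at one degree-$m$ vertex and dividing by the maximum number of such edges a level-$t$ vertex can absorb, where $\dmin\geq 3$ is what forces that maximum to be $r$ rather than $m$. Since case~\textit{(iii)} is not established and case~\textit{(ii)} as written proves a strictly weaker inequality than the one stated, the proposal does not prove the theorem.
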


Note that for \cref{thm:dist2} the lower bound decreases for increasing $\dmin$.
\cref{thm:distAll} can be used for pruning as follows: given $r,m,g$ and the order $n$, the smallest $\dmin$ can be calculated such that the lower bound is at most $n$. Thus, when adding an edge in the algorithm, we can check whether vertices that have degree larger than $r$, are at a distance of at least $\dmin$ from each other (since all vertices of degree larger than $r$ must eventually obtain degree $m$).

The downside of \cref{thm:distAll} is that it is restricted to even girth $g \geq 8$ and hence does not cover odd $g$ and even $g < 8$.
However, using \cref{thm:lowBoundMtree} we can also derive a lower bound for odd $g$ and even $g\geq4$. This can be done by taking $\size{L_{\dmin,m}} = 1$ and $\size{L_{j,m}}=0$ for all $j \in \{1, \ldots,  \dmin-1, \dmin+1, \ldots, t-1\}$, which means that there are exactly two vertices of degree $m$ at distance $\dmin$ in the BMT. On top of that, for even $g \geq 8$ \cref{thm:lowBoundMtree} is equivalent to cases (i) and (ii) from \cref{thm:distAll}. Thus, in other words, \cref{thm:lowBoundMtree} is a generalization of these two cases.

\subsection[Pruning rule: amount of vertices of degree m in a BMT]{Pruning rule: amount of vertices of degree $m$ in a BMT}\label{subsec:mPlacement}
For the next pruning rule, we make use of \cref{thm:lowBoundMtree} to limit the amount of vertices of degree $m$ on the different levels in the BMT. We define the sequence of vertices of degree $m$ in a BMT as an \textit{$m$-placement}.

\begin{defi}[$m$-placement]\label{def:m_plaatsing}
	The $m$-placement of a BMT $\NMTree{r}{m}{d}{\roott}$ is the $d$-tuple containing the amount of vertices of degree $m$ from level 0 up to and including $d-1$ in $\NMTree{r}{m}{d}{\roott}$, denoted as
	\begin{equation*}
		\placement(\NMTree{r}{m}{d}{\roott}) = \comb{ \size{L_{0,m}}, \size{L_{1,m}}, \ldots,  \size{L_{d-1,m}}}.
	\end{equation*}
    The suffix $[i]$ indicates the $(i+1)$-th element of an $m$-placement: $\placement(\NMTree{r}{m}{d}{\roott})[i] = \size{L_{i,m}}$.
\end{defi}

\begin{examp}[$m$-placement]\label{ex:m_plaatsing}
	\cref{fig:m_plaatsing} shows a BMT $\NMTree{3}{4}{3}{\vroot}$ with $m$-placement $\placement(\NMTree{3}{4}{3}{\vroot})=\comb{1,1,3}$. Furthermore, it holds that $\placement(\NMTree{3}{4}{3}{\vroot})[0]=1$, $\placement(\NMTree{3}{4}{3}{\vroot})[1]=1$ and $\placement(\NMTree{3}{4}{3}{\vroot})[2]=3$.
\end{examp}

Suppose one wants to generate all $\rmgIn{3}{4}{7}$-graphs of order $n=35$. Then there are multiple BMTs of order at most $n$ with different $m$-placements. 
Suppose $\NMTreeFam{3}{4}{3}{\vroot}$ is the set of all BMTs for $\rmgIn{3}{4}{7}$ with order at most $n=35$ and suppose $\setCombs = \{\placement(\NMTree{3}{4}{3}{\vroot}) \mid \NMTree{3}{4}{3}{\vroot} \in \NMTreeFam{3}{4}{3}{\vroot} \}$ is the set of all $m$-placements of BMTs in $\NMTreeFam{3}{4}{3}{\vroot}$. Then, by using \cref{thm:lowBoundMtree}, we can determine that $\setCombs = \{  \comb{ 1, 0, 0}, \ldots, \comb{ 1, 0, 6 },  \comb{ 1, 1, 0 }, \ldots,$ $\comb{ 1, 1, 3 }, \comb{ 1, 2, 0 }  \}$. The amount of vertices of degree $m$ on level $0$ is  always equal to 1 for odd $g$. Yet, we can derive from $\setCombs$ that $\size{L_{1,m}}$ is at most equal to 2 and $\size{L_{2,m}}$ at most equal to 6. We will call the $m$-placement that takes the maximum amount of vertices on each level the \textit{maximal $m$-placement}.

\begin{defi}[Maximal $m$-placement]
	Suppose $\NMTreeFam{r}{m}{d}{\roott}$ is a set of BMTs and $\setCombs = \{\placement(\NMTree{r}{m}{d}{\roott}) \mid \NMTree{r}{m}{d}{\roott} \in \NMTreeFam{r}{m}{d}{\roott} \}$. 
	Then the maximal $m$-placement of $\NMTreeFam{r}{m}{d}{\roott}$ is equal to
	\begin{equation*}
		\cmax =  \left( \underset{\placement \in \setCombs}{\max} \: \placement[0], \ldots, \underset{\placement \in \setCombs}{\max} \: \placement[d-1]  \right)  .
	\end{equation*}	
\end{defi}

Using the maximal $m$-placement we can derive the following pruning rule: suppose $\cmax$ is the maximal $m$-placement of the set of possible BMTs $\NMTreeFam{r}{m}{d}{\roott}$ for given $r,m,g$ and $n$. If the amount of vertices of the current constructed graph on level $i \in \{0, \ldots, d-1\}$ of degree larger than $r$ equals $\cmax[i]$, then no edges need to be added to vertices of level $i$ that have degree $r$.

\subsection{The backtracking algorithm}\label{sec:biregBacktrackAlgo}

\newcommand{\searchDepth}{d_{s}}
\newcommand{\currDeg}{deg_{curr}}
\newcommand{\eersteFase}{firstPhase}
\newcommand{\validEdges}{E_{\textrm{add}}}
\newcommand{\backtrack}{recursivelyAddEdges}
\newcommand{\checkVisited}{GWasUpdated}

In this subsection we will present pseudocode of the backtracking algorithm to exhaustively generate all pairwise non-isomorphic $\rmg$-graphs of a given order $n$. Algorithm~\ref{alg:genBiregGirthGraphs} fulfills this task. It calls the method $\backtrack$ (shown in Algorithm~\ref{alg:backtrack2}) in which edges are recursively added one by one. The algorithm shares similarities with the algorithms discussed in \cite{exhEgr,vertexGirthRegular}. For the rest of this section we will give some clarifications regarding the pseudocode. For more details on the implementation, such as the data structures we used, we refer to~\cite{thesis} and the actual code in~\cite{repo}.

\begin{itemize}[noitemsep]
    \item The set $\validEdges$ contains edges which are not part of the current constructed graph, but are eligible to be added to the graph. An edge is \textit{eligible} as long as it is not detected to be prunable, because adding it would lead to a vertex having degree larger than $m$, lead to a cycle of length smaller than $g$ or it can be pruned with one of the pruning rules discussed in \cref{subsec:distDegM,subsec:mPlacement}.
    \item We use a fail-first heuristic to select the next edge to add in the algorithm. Specifically, we choose edges incident to a vertex $u$ that has the fewest eligible incident edges.
    \item The method $\algoName{enoughValidEdgesLeft}(\G, \validEdges)$ returns $\True$ if each vertex of degree $d_{1}<r$ has at least $r-d_{1}$ incident edges left in $\validEdges$ and each vertex of degree $d_{2}>r$ has at least $m-d_{2}$ incident edges left in $\validEdges$. Otherwise, it returns $\False$.
    \item In line~\refAlgoLine{lst:pruningChecks} of Algorithm~\ref{alg:backtrack2} we check whether the graph obtained by adding the edge $\edge{u}{w}$ can be pruned by checking whether a) the vertices can still obtain the correct degree (using the method $\algoName{enoughValidEdgesLeft}$), b) the $\dmin$ constraint is violated (as described in \cref{subsec:distDegM}), c) the maximal $m$-placement constraint is violated (as described in \cref{subsec:mPlacement}), d) the pruning rule from \cite{Exoo2011} can be applied and e) the ``Isolated points'' pruning rule from Section 3 of~\cite{McKay1998} can be applied. The latter pruning rule works as follows: suppose $V_0 = \{v_0, \ldots, v_k\}$ is the set of isolated vertices in the current constructed graph and $u$ is an arbitrary vertex. Then adding the edge $\edge{u}{v_0}$ results in a graph which is isomorphic to the graph obtained by adding $\edge{u}{v_i}$ for $i \in \{1,\ldots,k\}$. Therefore we only allow one of these edges to be added and prune the rest. 
\end{itemize}

\begin{algorithm}[htb]
	\caption{\algoName{genBiregGirthGraphs}($n,r,m,g)$}\label{alg:genBiregGirthGraphs}
    \If{$n < M\rmg$}{
        \Comment{If the order $n$ is smaller than $M\rmg$, there are definitely no $\rmg$-graphs.}
        \Return
    }
	$\dmin \gets \algoName{minDistDegM}(n,r,m,g)$ \\
	$\cmax \gets \algoName{maxMPlacement}(n,r,m,g)$ \\
    $\T \gets \algoName{makeBMT}(n,r,m,g)$\\
    \Comment{Add $n-M\rmg$ isolated vertices to BMT $T$.}
    $\G_{start} \gets \algoName{addIsoVertices}(T, n-M\rmg)$\\
    \Comment{Determine which edges can be added without exceeding the girth $g$, maximum degree $m$ and not violating $\cmax$, $\dmin$}
    $\validEdges \gets \algoName{calcValidAddableEdges}(\G_{start}, m, g, \cmax, \dmin)$\\
    \Comment{Heuristic: edges will be added incident with the vertex $u$ that has the fewest options in $\validEdges$}
    $u \gets \arg\min_{w \in V(G), \degree{w}<r}(|\{e \colon e \in \validEdges \text{ and } w \in e \}|)$ \\
    $\algoName{\backtrack}(\G_{start}, \validEdges, \cmax, \dmin, r, m, g, u, \True)$ \Comment{See Algorithm~\ref{alg:backtrack2}}
\end{algorithm}

\begin{algorithm}[htbp]
\DontPrintSemicolon
	\caption{\algoName{\backtrack}($\G, \validEdges, \cmax, \dmin, r, m, g, u,\checkVisited$)}\label{alg:backtrack2}
	\Comment{$u$: current vertex to add edges to}
    \Comment{$\checkVisited$: $\True$ if $\G$ has not been called with $\backtrack$ before; otherwise $\False$}
	\If{$\checkVisited$}{
		\If{method was called with graph that is isomorphic with $\G$}{\Return}
        \If(\Comment*[h]{$\G$ is an $\rmg$-graph}.){$\G$ has degree set $\{r,m\}$}{
			output $\G$ \\ 
            \Comment{All vertices have degree $r$ or $m$. Try to add edges between vertices of degree $r$ until they obtain degree $m$.}
            $u \gets \arg\min_{w \in V(G), \degree{w}=r}(|\{e \colon e \in \validEdges \text{ and } w \in e \}|)$ \\
			\While{$\algoName{enoughValidEdgesLeft}(\G, \validEdges)$}{
                \Comment{In this case, we call $\backtrack$ with the same graph $\G$, so we don't want to do an isomorphism check and set $\checkVisited$ to $\False$.}
				$\algoName{\backtrack}(\G, \validEdges, \cmax, \dmin, r, m, g, u, \False)$ \\
                \Comment{Forbid adding edges to $u$.}
                $\validEdges \gets \validEdges \setminus \{e \colon e \in \validEdges \text{ and } u \in e \}$ \\
                $u \gets \arg\min_{w \in V(G), \degree{w}=r}(|\{e \colon e \in \validEdges \text{ and } w \in e \}|)$ \\
			}
			\Return \\
		}
        \Comment{$\G$ has vertices which are not of degree $r$ or $m$. Choose new $u$, if the current vertex $u$ has been completed to degree $r$ or $m$.}
    	\If{$\degree{u}=r$ and there is a vertex $v$ such that $\degree{v}<r$}{
            $u \gets \arg\min_{w \in V(G), \degree{w}<r}(|\{e \colon e \in \validEdges \text{ and } w \in e \}|)$ \\
        } 
        \ElseIf{$\degree{u}=r$ or $\degree{u}=m$}{
                $u \gets \arg\min_{w \in V(G), r<\degree{w}<m}(|\{e \colon e \in \validEdges \text{ and } w \in e \}|)$ \\
            }
    	}
    \Comment{Apply pruning rule from \cite{Exoo2011} on all $\validEdges$.}
		$\validEdges \gets \algoName{pruneValidEdgesExoo}(\G, \validEdges)$ \\
		\If{$\neg \algoName{enoughValidEdgesLeft}(\G, \validEdges)$}{\Return}
    \Comment{Try adding each valid edge incident to $u$.}    
    \ForEach{$w \in \{w \colon \edge{u}{w} \in \validEdges\})$}{
            \Comment{Copy $\validEdges$ in $prev\validEdges$ to restore it after removing $\edge{u}{w}$ again.}
            $prev\validEdges \gets \validEdges$\\
			$\E(\G) \gets \E(\G) \cup \{\edge{u}{w}\}$\\
            \Comment{Remove edges from $\validEdges$ resulting from adding edge $\edge{u}{w}$.}
            $\validEdges \gets \algoName{updateValidAddableEdges}(\G, \validEdges, \cmax, \dmin,r,m,g)$\\
            \Comment{Check if we can prune $\edge{u}{w}$.}
            \If{none of the pruning rules can be applied}{ \label{lst:pruningChecks}
				$\algoName{\backtrack}(\G, \validEdges, \cmax, \dmin, r, m, g, u, \True)$ \\
			}
			$\E(\G) \gets \E(\G) \setminus \{\edge{u}{w}\}$\\
            \Comment{Restore $\validEdges$ and remove $\edge{u}{w}$ from it.}
            $\validEdges \gets prev\validEdges \setminus \{\edge{u}{w}\}$\\
			\If{$ \neg \algoName{enoughValidEdgesLeft}(\G, \validEdges)$}{
				\Return \\
			}
	}
\end{algorithm}

\subsection{Experiments and results}\label{subsec:exh_gen_exp}

In this subsection we first present the exhaustive lists we obtained with the algorithm in \cref{sec:exp_exh_lists} and after that, we present the improved lower bounds in \cref{sec:exp_low_bounds}. We recall that our implementation of the algorithm can be downloaded from \url{https://github.com/tiboat/biregGirthGraphs} \cite{repo} and in \cref{sec:sanity} we describe the extra measures we took to ensure the correctness of our implementation.

\subsubsection{Exhaustive lists}\label{sec:exp_exh_lists}
\cref{tab:exh_lists} presents the number of pairwise non-isomorphic bi-regular cages $\size{\rmgnFam{r}{m}{g}{n\rmg}}$ found with the exhaustive generation algorithm -- which we will denote as $\biregGen$ -- for $\numExhLists$ triples of $(r,m,g)$. The obtained graphs are also available at \cite{repo}. This result illustrates one of the strengths of our algorithm, since exhaustive lists of $\rmg$-cages are only known in few cases. 
\cref{tab:exh_lists} also presents some additional information: the order $n\rmg$, the actual minimum distance $\dmin\left(\rmgnFam{r}{m}{g}{n\rmg}\right)$ between vertices of degree $m$ in graphs in $\rmgnFam{r}{m}{g}{n\rmg}$, i.e.
\begin{equation*}
    \dmin\left(\rmgnFam{r}{m}{g}{n\rmg}\right) = \min\limits_{\G \in \rmgnFam{r}{m}{g}{n\rmg}}  \min\limits_{v_1,v_2 \in \V_m,  v_1 \neq v_2} \distwG{\G}{v_1}{v_2},
\end{equation*}
and the minimum and maximum amount of vertices of degree $m$ in graphs in $\rmgnFam{r}{m}{g}{n\rmg}$. Note that in case all graphs in $\rmgnFam{r}{m}{g}{n\rmg}$ only have one vertex of degree $m$, then $\dmin\left(\rmgnFam{r}{m}{g}{n\rmg}\right)= \infty$. The execution time ranges from $0.004$ seconds for $\rmgIn{3}{4}{5}$ to roughly 17 days for $\rmgIn{3}{9}{6}$. In the latter case we split up the computation among multiple processes.
A selection of the bi-regular cages from \cref{tab:exh_lists} can also be inspected at \textit{the House of Graphs} \cite{HOG} by searching for the keyword ``bi-regular cage''.

We now discuss some observations following from the data of \cref{tab:exh_lists}. Firstly, it holds that $\size{\rmgnFam{r}{m}{g}{n\rmg}} < \size{\rmgnFam{r}{m+1}{g}{n\rmgIn{r}{m+1}{g}}}$, for all $(r,m,g)$ in the table. Secondly, while for the classical Cage Problem there are only a few cages in all cases for which the exhaustive lists are known, the situation seems very different for bi-regular cages. For example, we determined that there are exactly 154 290 pairwise non-isomorphic $\rmgIn{3}{9}{6}$-cages. Moreover, $\dmin\left(\rmgnFam{r}{m}{g}{n\rmg}\right)$ is at least equal to $2$ for each $(r,m,g)$ in the table, which lists graphs of girth $g \geq 5$. Yet, for $g=3$ vertices of degree $m$ can be adjacent. Nonetheless, we can still ask the question if vertices of degree $m$ are non-adjacent for $g\geq4$.
For $g=4$, there can be no adjacent vertices of degree $m$, since when placing two adjacent vertices of degree $m$ in the root of the BMT, the order of the BMT, and thus a lower bound, is equal to $2m$, which is strictly larger than $n\rmgIn{r}{m}{4}=r+m$. Hence, we can formulate our problem as follows.

\begin{p}
    Does there exist an $\rmg$-graph with $g\geq5$ in which two vertices of degree $m$ are adjacent?
\end{p}

Thirdly, we observe that the amount of vertices of degree $m$ is relatively small in comparison to the amount of vertices of degree $r$. More precisely, the largest ratio $n\rmg / \size{V_m}$ in the table is $0.231$ for $\rmgIn{3}{4}{5}$. The latter observation is intuitively sensible, since fewer vertices of degree $m$ lead to fewer edges, which makes it easier to avoid creating cycles shorter than the desired girth. Fourthly, we note that $n\rmgIn{3}{4}{10}=82$ is a rather recent result  \cite{biregEven} where they also present a $\rmgIn{3}{4}{10}$-cage. With $\biregGen$ we were able to show that this $\rmgIn{3}{4}{10}$-cage is unique. 

Lastly, we remark that some of the graphs that we found have interesting properties. For example, Fig.~\ref{fig:375} shows a $(\{3,7\};5)$-cage, which is \textit{hypohamiltonian} and appeared in~\cite{hypoham17} (a graph $G$ is hypohamiltonian if $G$ is not hamiltonian, but $G-v$ is hamiltonian for each $v \in V(G)$). It is a member of the following infinite family of $(\{3,m\};5)$-cages, which generalizes the Petersen graph: for each integer $m \geq 3$ the family contains one graph with vertex set $\{a_0\} \cup \{b_0,b_1,\ldots,b_{m-1}\} \cup \{c_0,c_1,\ldots,c_{2m-1}\}$ and edge set $\{\{a_0,b_i\}~|~ 0 \leq i \leq m-1\} \cup \{\{c_i,c_{i+1}\}~|~ 0 \leq i \leq 2m-1\} \cup \{\{b_i,c_{2i}\}~|~ 0 \leq i \leq m-1\} \cup \{\{b_i,c_{2i+3}\}~|~ 0 \leq i \leq m-1\}$, (where indices are taken modulo $m$ or $2m$). We remark that a different family of $(\{3,m\};5)$-cages is known (for example as described in~\cite{Dn-cages}). Fig.~\ref{fig:346And455} shows one of the $(\{3,4\};6)$-cages and one of the $(\{4,5\};5)$-cages. These graphs can be obtained by adding appropriate edges between a) the $(3,4)$ Moore tree and the $12$-cycle and b) the $(4,3)$ Moore tree and the M\"obius-Kantor graph, respectively.

        \begin{figure}[h!]
\begin{center}
\begin{tikzpicture}[scale=0.65]
        \foreach \x in {0,1,...,6}{
        \draw[fill] (\x*360/7:2.0) circle (1.8pt);
        \draw ({360/7 * (\x)}:2.0) -- (0,0);

        \draw ({360/7 * (\x)}:2.0) -- ({360/14 * (2*\x) -30}:4.0);
        \draw ({360/7 * (\x)}:2.0) -- ({360/14 * (2*\x+3)-30}:4.0);
        }
        
        \foreach \x in {0,1,...,13}{
        \draw[fill] (\x*360/14-30:4.0) circle (1.8pt);
        \draw ({360/14* (\x) -30}:4.0) -- ({360/14 * (\x+1) -30}:4.0);
        }
        \draw[fill] (0,0) circle (1.8pt);
\end{tikzpicture}
\end{center}
\caption{One of the $(\{3,7\};5)$-cages, which is also hypohamiltonian.}\label{fig:375}
\end{figure}
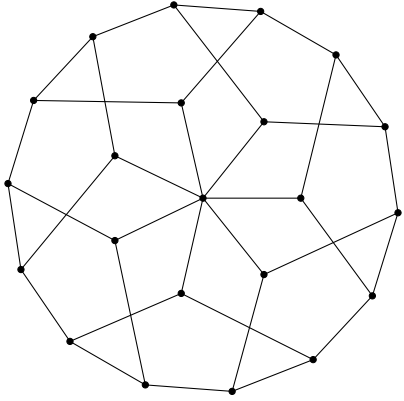

        \begin{figure}[h!]
\begin{center}
\begin{tikzpicture}[scale=0.65]
        \foreach \x in {0,1,...,11}{
        \draw[fill] (\x*360/12:4.0) circle (1.8pt);

        \draw ({360/12 * (\x)}:4.0) -- ({360/12 * (\x+1)}:4.0);
        }
        \foreach \x in {0,1,...,3}{
        \draw[fill] (\x*360/4+45:2.5) circle (1.8pt);
        }

        \draw (0*360/4+45:2.5) -- ({360/12 * (3*2)}:4.0);
        \draw (0*360/4+45:2.5) -- ({360/12 * (3*2+4)}:4.0);
        \draw (0*360/4+45:2.5) -- ({360/12 * (3*2+8)}:4.0);

        \draw (1*360/4+45:2.5) -- ({360/12 * (3*1)}:4.0);
        \draw (1*360/4+45:2.5) -- ({360/12 * (3*1+4)}:4.0);
        \draw (1*360/4+45:2.5) -- ({360/12 * (3*1+8)}:4.0);

        \draw (2*360/4+45:2.5) -- ({360/12 * (3*3)}:4.0);
        \draw (2*360/4+45:2.5) -- ({360/12 * (3*3+4)}:4.0);
        \draw (2*360/4+45:2.5) -- ({360/12 * (3*3+8)}:4.0);

        \draw (3*360/4+45:2.5) -- ({360/12 * (3*4)}:4.0);
        \draw (3*360/4+45:2.5) -- ({360/12 * (3*4+4)}:4.0);
        \draw (3*360/4+45:2.5) -- ({360/12 * (3*4+8)}:4.0);

        \draw (-1,0) -- (1*360/4+45:2.5);
        \draw (-1,0) -- (2*360/4+45:2.5);
        \draw (1,0) -- (0*360/4+45:2.5);
        \draw (1,0) -- (3*360/4+45:2.5);
        \draw (-1,0) -- (1,0);
        \draw[fill] (-1,0) circle (1.8pt);
        \draw[fill] (1,0) circle (1.8pt);
\end{tikzpicture}\quad
\begin{tikzpicture}[scale=0.65]
        \foreach \x in {0,1,...,15}{
        \draw[fill] (\x*360/16+34:4.0) circle (1.8pt);

        \draw ({360/16 * (\x)+34}:4.0) -- ({360/16 * (\x+1)+34}:4.0);
        }

        \foreach \x in {0,2,4,6,8,10,12,14}{
        \draw ({360/16 * (\x)+34}:4.0) -- ({360/16 * (\x+5)+34}:4.0);
        }
        \draw ({360/16 * (4)+34}:4.0) -- (0,1.5);
        \draw ({360/16 * (4+3)+34}:4.0) -- (0,1.5);
        \draw ({360/16 * (4-5)+34}:4.0) -- (0,1.5);
        \draw ({360/16 * (4-5-3)+34}:4.0) -- (0,1.5);

        \draw ({360/16 * (8)+34}:4.0) -- (-1.5,0);
        \draw ({360/16 * (8+3)+34}:4.0) -- (-1.5,0);
        \draw ({360/16 * (8-5)+34}:4.0) -- (-1.5,0);
        \draw ({360/16 * (8-5-3)+34}:4.0) -- (-1.5,0);

        \draw ({360/16 * (6)+34}:4.0) -- (0,-1.5);
        \draw ({360/16 * (6+3)+34}:4.0) -- (0,-1.5);
        \draw ({360/16 * (6-5)+34}:4.0) -- (0,-1.5);
        \draw ({360/16 * (6-5-3)+34}:4.0) -- (0,-1.5);

        \draw ({360/16 * (2)+34}:4.0) -- (1.5,0);
        \draw ({360/16 * (2+3)+34}:4.0) -- (1.5,0);
        \draw ({360/16 * (2-5)+34}:4.0) -- (1.5,0);
        \draw ({360/16 * (2-5-3)+34}:4.0) -- (1.5,0);

        \draw (0,0) -- (1.5,0);
        \draw (0,0) -- (-1.5,0);
        \draw (0,0) -- (0,1.5);
        \draw (0,0) -- (0,-1.5);
        
        \draw[fill] (0,0) circle (1.8pt);
        \draw[fill] (1.5,0) circle (1.8pt);
        \draw[fill] (-1.5,0) circle (1.8pt);
        \draw[fill] (0,1.5) circle (1.8pt);
        \draw[fill] (0,-1.5) circle (1.8pt);
\end{tikzpicture}
\end{center}
\caption{One of the $(\{3,4\};6)$-cages (left) and one of the $(\{4,5\};5)$-cages (right).}\label{fig:346And455}
\end{figure}
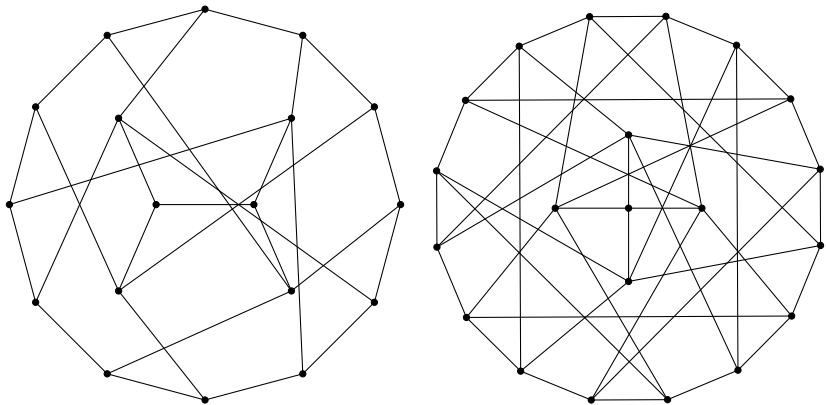

\begin{table}[htbp]
	\centering
	\begin{tabular}{r r r r r r r  r r }
		\toprule                
		& & & & & & \multicolumn{2}{c}{$\size{\V_m}$}  \\ 
		\cmidrule(lr){7-8}
		$r$ & $m$ & $g$ & $n\rmg$ & $\size{\rmgnFam{r}{m}{g}{n\rmg}}$ & $\dmin\left(\rmgnFam{r}{m}{g}{n\rmg}\right)$ & \multicolumn{1}{c}{min} & \multicolumn{1}{c}{max} \\ 
		\midrule
        3 & 4 & 5 & 13 & 4 & 2 & 1 & 3 \\
        3 & 4 & 6 & 18 & 2 & 2 & 2 & 4 \\
        3 & 4 & 7 & 29 & 11 & $\infty$ & 1 & 1 \\
        3 & 4 & 8 & 39 & 2 & 4 & 3 & 3 \\
        3 & 4 & 9 & 61 & 15 & $\infty$ & 1 & 1 \\
        3 & 4 & 10 & 82 & 1 & 4 & 4 & 4 \\
        3 & 5 & 5 & 16 & 20 & 2 & 1 & 3 \\
        3 & 5 & 6 & 22 & 6 & 2 & 2 & 4 \\
        3 & 5 & 7 & 36 & 32 648 & 3 & 1 & 2 \\
        3 & 5 & 8 & 48 & 44 & 4 & 3 & 3 \\
        3 & 6 & 5 & 19 & 92 & 2 & 1 & 3 \\
        3 & 6 & 6 & 26 & 18 & 2 & 2 & 4 \\
        3 & 6 & 8 & 55 & 6 & 4 & 5 & 5 \\
        3 & 7 & 5 & 22 & 792 & 2 & 1 & 3 \\
        3 & 7 & 6 & 30 & 60 & 2 & 2 & 4 \\
        3 & 8 & 5 & 25 & 9 782 & 2 & 1 & 3 \\
        3 & 8 & 6 & 34 & 322 & 2 & 2 & 4 \\
        3 & 9 & 5 & 28 & 154 290 & 2 & 1 & 3 \\
        3 & 9 & 6 & 38 & 2 239 & 2 & 2 & 4 \\
        4 & 5 & 5 & 21 & 3 & 2 & 2 & 4 \\
        4 & 5 & 6 & 32 & 13 & 2 & 2 & 6 \\
        4 & 6 & 5 & 25 & 34 858 & 2 & 1 & 5 \\
        5 & 6 & 5 & 31 & 6 & 2 & 1 & 5 \\
        6 & 7 & 5 & 43 & 1 & 2 & 6 & 6 \\
        \bottomrule
	\end{tabular}
	\caption{The number of pairwise non-isomorphic $\rmg$-cages $\size{\rmgnFam{r}{m}{g}{n\rmg}}$ generated by $\biregGen$ together with the order $n\rmg$, the minimum distance $\dmin$ between vertices of degree $m$, and the minimum and maximum number of vertices of degree $m$.}
	\label{tab:exh_lists}
\end{table}

\subsubsection{Improved lower bounds}\label{sec:exp_low_bounds}
The exhaustive generation algorithm allows us to improve lower bounds as well. That is because if the algorithm does not produce any graphs for the order $n$ equal to the lower bound $n_{\textrm{low}}$, then we know that the lower bound is at least $n_{\textrm{low}}+1$. We were able to improve the lower bound of $n\rmgIn{4}{5}{7}$ from 66 to 69, by executing the algorithm for orders 66, 67 and 68 and thus obtaining no graphs in the output for each of these orders. The total running time of these computations was roughly 189 CPU days of which the computation for order 68 took 185 CPU days.

\section{Constructions for bi-regular graphs out of regular graphs}\label{sec:constr}

In this section we will present three straightforward constructions that construct bi-regular graphs with degree set $\{r,m\}$ based on $r$-regular graphs. The three constructions require a different condition on $m$.

\begin{constr}[$r$-regular to degree set $\{r,r+1\}$]\label{constr:addEdge}
    Given an $r$-regular graph $\Greg$, add an edge between two different vertices of $\Greg$.
\end{constr}

When trying to construct an $\rmg$-graph $\Gbireg$ with \cref{constr:addEdge} we can limit ourselves to $r$-regular graphs with $g(\Greg) \geq g$ that have two vertices at distance at least $g-1$. That is because, firstly, adding an edge cannot increase the girth. Secondly, the two vertices $v,v'$ that will be connected with an edge need to be at distance at least $g-1$ in $\Greg$, since otherwise this would lead to a cycle of length smaller than $g$ and thus $g(\Gbireg)<g$.

\begin{constr}[$r$-regular to degree set $\{r,2t\}$]\label{constr:even}
Given an $r$-regular graph $\Greg$ with $r \geq 2$. The following steps lead to a graph $\Gbireg$ with degree set $\{r,m=2t\}$ where $t \geq 1$.
\begin{enumerate}[noitemsep]
    \item Take a set of $t$ non-adjacent edges $\Edel \subset \E(\Greg)$ and delete these edges from the graph.
	\item Add a vertex $w_1$.
	\item Add an edge between $w_1$ and every vertex incident to an edge of $\Edel$.
\end{enumerate}
\end{constr}

The constructed graph $\Gbireg$ in \cref{constr:even} has degree set $\{r,m\}$ since the new vertex $w_1$ is of degree $m$ and all other vertices are of degree $r$.

\begin{constr}[$r$-regular to degree set $\{r,2t+1\}$]\label{constr:odd}
Given an $r$-regular graph $\Greg$ with $r \geq 2$. The following steps lead to a graph $\Gbireg$ with degree set $\{r,m=2t+1\}$ where $t \geq 1$.
\begin{enumerate}[noitemsep]
    \item Take a set of $2t$ non-adjacent edges $\Edel \subset \E(\Greg)$ and delete these edges from the graph.
	\item Add two vertices $w_1, w_2$ and add the edge $\edge{w_1}{w_2}$.
	\item Add an edge between $w_1$ and exactly one vertex incident to each edge in $\Edel$.
    \item Add an edge between $w_2$ and the incident vertex of every edge in $\Edel$ that did not connect to $w_1$.
\end{enumerate}
\end{constr}

The constructed graph $\Gbireg$ in \cref{constr:odd} has degree set $\{r,m\}$ since the new vertices $w_1,w_2$ are of degree $m$ and all other vertices are of degree $r$. For \cref{constr:even,constr:odd} we can also restrict ourselves to $r$-regular graphs which have a given minimum girth and enough edges at a large enough distance to be able to obtain a graph of a specified girth $g$. For more information on these restrictions, we refer to \cite[Section~4.2]{thesis}.

\subsection{Experiments and results}\label{sec:constr_exp}

\newcommand{\triVT}{\textsc{3-VT}}
\newcommand{\triCay}{\textsc{3-Cay}}
\newcommand{\triAT}{\textsc{3-1T}}
\newcommand{\triSS}{\textsc{3-SS}}
\newcommand{\tetraAT}{\textsc{4-1T}}
\newcommand{\tetraTwoAT}{\textsc{4-2T}}
\newcommand{\tetraET}{\textsc{4-ET}}
\newcommand{\pentaAT}{\textsc{5-1T}}

We consulted exhaustive lists of $r$-regular graphs available in the literature as seeds for our constructions. \cref{tab:reg_lists} shows the used lists of $r$-regular graphs with the value of $r$, the maximum order in the list and which property the graphs in the list have in common. Since the amount of $r$-regular graphs becomes astronomically large for larger $n$, constructed lists of regular graphs usually limit themselves to specific types of graphs, more specifically, highly symmetric graphs.
For more information on these highly symmetric graphs, we refer to \cite{algGraphTheoryBook}. The constructions led to the improvement of $\numImprovedUpBounds$ upper bounds, which we could not further improve using the upper bounds from \cref{sec:gen_up_bound}. 
The exact improvements can be found in \cref{sec:tables}.

Furthermore, we note that for $\rmgIn{4}{11}{9}$ we did not try the construction on every graph in $\tetraTwoAT$ (cf.\ \cref{tab:reg_lists}), since the running time of the construction code was considered to be too long on some graphs (longer than 100 hours).
Finally, we mention a limitation of two of our constructions. Recall our observation from \cref{subsec:exh_gen_exp} stating that for the generated bi-regular cages, vertices of degree $m$ are never neighbors. Yet, \cref{constr:addEdge} and \cref{constr:odd} lead to neighboring vertices of degree $m$, making it unlikely that the found $\rmg$-graphs would be bi-regular cages.

\begin{table}[htb]
	\centering
	\begin{tabular}{r r r r r r}
		\toprule
		$r$ & Max order & Source list & Paper(s) & Property & Abbreviation\\
		\midrule
		3 & 1 280 &~\cite{cubicVertTransUpTo1280:data} &~\cite{cubicVertTransUpTo1280,cubicVertTransUpTo1280ExtraTheory} & vertex transitive & $\triVT$ \\
        3 & 4 094 &~\cite{cayleyCubicUpTo4094:data} &~\cite{cubicVertTransUpTo1280} & Cayley & $\triCay$\\
        3 & 10 000 &~\cite{symmCubicUpTo10000:data} &~\cite{cubicSymm768} & 1-arc transitive & $\triAT$\\
        3 & 10 000 &~\cite{semiSymmCubicUpTo10000:data} &~\cite{cubicSymm768} & semi-symmetric & $\triSS$\\    
		4 & 512 &~\cite{edgeTransQuadric:data} &~\cite{edgeTransQuadric} & edge transitive & $\tetraET$ \\
		4 & 640 &~\cite{arcTransQuadric:data} &~\cite{cubicVertTransUpTo1280,cubicVertTransUpTo1280ExtraTheory} & 1-arc transitive & $\tetraAT$ \\
		4 & 2 000 &~\cite{2-arcTransQuadric:data} &~\cite{2-arcTransQuadric} & 2-arc transitive & $\tetraTwoAT$\\
		5 & 500 &~\cite{pentaLink} & & 1-arc transitive & $\pentaAT$ \\
		\bottomrule
	\end{tabular}
	\caption{Lists of regular graphs with degree of the vertices $r$, maximum order of a graph in the list, source of the list, possibly related papers, properties of the graphs in the list and an abbreviation for the list.}
	\label{tab:reg_lists}
\end{table}

\section[Generalized upper bound by gluing (r,g)-graphs]{Generalized upper bound by gluing $\rg$-graphs}\label{sec:gen_up_bound}

In~\cite{semiCubic2023} Aguilar, Araujo-Pardo, and Berman generalized Theorem 3 from~\cite{ConstructionsBireg}, proving the following result.

\begin{thm}[{\cite[Theorem 1]{semiCubic2023}}]\label{thm:upperBoundSemiReg}
	Let $G$ be an $(r,g)$-graph of even girth and order $n_g$ with at least two vertices at distance $g/2$. If $m=3k+t$, then there exists an $\rmg$-graph of order
	\[k(n_g-2) + 
	\begin{cases}
		2 & \text{ if } t=0,\\
		n_{g} + 2 &\text{ if } t=1,\\
		n_{g} &\text{ if } t=2.
	\end{cases}
	\]
\end{thm}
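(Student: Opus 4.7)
My plan is to prove the theorem by explicit construction in each of the three cases $t \in \{0, 1, 2\}$, all sharing the common idea of gluing copies of $G$ at a designated pair of vertices $u, v$ with $\distwG{G}{u}{v} = g/2$. Throughout, the assumption $r = 3$ is implicit, forced by the identity $m = 3k$ in the $t = 0$ case matching the degree $rk$ of the glued vertex.

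For $t = 0$ (so $m = 3k$), I take $k$ disjoint copies $G_1, \ldots, G_k$ of $G$, let $u_i, v_i$ denote the images of $u, v$ in $G_i$, and identify all the $u_i$ into a single vertex $U$ and all the $v_i$ into a single vertex $V$. The resulting graph $H$ has exactly $k(n_g - 2) + 2$ vertices; $U$ and $V$ each acquire degree $rk = 3k = m$, while every other vertex keeps its original degree $r = 3$. To verify that $g(H) = g$, I argue that any simple cycle $C$ in $H$ either avoids both $U$ and $V$, hence lies inside a single copy and has length $\geq g$, or contains both $U$ and $V$, in which case $C$ splits into two internally disjoint $U$--$V$ paths, each confined to a single copy and therefore of length at least $\distwG{G}{u}{v} = g/2$. (A cycle through only one of $U, V$ reduces to the first situation.) Thus $|C| \geq g$, and since $G$ retains a $g$-cycle inside $H$, the girth equals $g$.

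For $t \in \{1, 2\}$, the vertex counts $k(n_g - 2) + n_g + 2$ and $k(n_g - 2) + n_g$ suggest augmenting the base construction with one further copy of $G$, attached so as to raise the degrees of $U$ and $V$ from $3k$ to $3k + 1$ or $3k + 2$ while keeping every other vertex of degree $3$. A natural realisation is to glue $k + 1$ copies and then perform a local edge-swap inside the extra copy $G_{k+1}$: delete one edge incident to $U$ and one edge incident to $V$ so that $U, V$ drop from degree $3k + 3$ to $3k + 2$, and for $t = 2$ add a single replacement edge between the two orphaned degree-$2$ vertices. The case $t = 1$ is handled analogously, with a small additional two-vertex patch accounting for both the missing degree at $U, V$ and the extra two vertices.

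The main technical obstacle lies in the local modifications used for $t \in \{1, 2\}$: the new edges introduced inside the augmented copy could a priori close a cycle of length less than $g$. The cross-copy argument from the $t = 0$ case still handles any cycle crossing between distinct copies via $U$ or $V$, but verifying girth preservation for the local modification requires selecting the deleted edges carefully so that the two rewired endpoints lie at distance at least $g - 1$ in the remaining graph. The existence of such a choice, which is the heart of the proof, follows from combining the girth condition $g(G) = g$ with the hypothesis $\distwG{G}{u}{v} = g/2$ and, when the local distance inside the modified copy is too short, from exploiting alternative paths through the other glued copies.
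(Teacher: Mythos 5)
Your $t=0$ case is correct and is exactly the argument the paper uses (it is the $s=2$, $r=3$ instance of Case~1 of \cref{thm:genUpperBound}, whose proof specializes to the cited theorem). The gap lies in $t=1$ and $t=2$. For $t=2$ you glue $k+1$ copies at $U,V$, delete one edge $\edge{U}{a}$ and one edge $\edge{V}{b}$ inside the extra copy, and join the orphaned vertices $a,b$ by a new edge; as you note, this requires $\dist{a}{b}\geq g-1$ after the deletions, but you do not establish this, and it is false in general. Since $a$ is a neighbour of $u$ and $b$ a neighbour of $v$ with $\dist{u}{v}=g/2$, one only has $\dist{a}{b}\leq g/2+2$ before deleting anything, and removing two edges of a well-connected graph rarely pushes this up to $g-1$. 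Concretely, take $G$ to be the Heawood graph ($r=3$, $g=6$, diameter $3$): for any admissible choice of $a$ and $b$ they are either already adjacent (so no simple edge can be added) or joined by three internally disjoint paths of length $3$, of which the two deleted edges can destroy at most two; hence $\dist{a}{b}=3<5=g-1$ survives and the new edge creates a $4$-cycle. Your remark that one can exploit ``alternative paths through the other glued copies'' is backwards: extra paths can only decrease $\dist{a}{b}$, so they work against you. The $t=1$ case is not actually constructed at all --- ``a small additional two-vertex patch'' leaves unspecified how two new vertices can absorb the four edge-deficiencies created at $U$ and $V$ while themselves reaching degree $3$ and respecting the girth.

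The proof in the paper (Cases~2 and~3 of \cref{thm:genUpperBound} with $s=2$, $l=1$) sidesteps this by never deleting edges incident to the glued vertices. For $t=2$ one takes $k$ copies plus one extra copy $H$, deletes an \emph{arbitrary} edge $\edge{v}{w}$ of $H$ --- whose endpoints are then automatically at distance at least $g-1$ in $H$ minus that edge, precisely because $H$ has girth $g$ --- and identifies $v$ with one remote vertex of each of the $k$ copies and $w$ with the other, producing two vertices of degree $(r-1)+rk=3k+2$. For $t=1$ one instead attaches new pendant vertices $v',w'$ to the endpoints of the deleted edge and identifies those with the remote vertices, giving degree $1+3k$. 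In both cases every segment of a cycle between the two high-degree vertices that stays inside a single copy has length at least $g/2$ (or at least $g-1$ inside the modified copy), so the girth is preserved with no extra distance hypothesis. To repair your argument you would need to relocate the new adjacencies so that they join the endpoints of a deleted edge (distance $\geq g-1$ for free) rather than two vertices sitting next to $U$ and $V$.
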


Note that when $g$ is even, one can always find at least two vertices which are at distance $g/2$ in any $\rg$-graph. The idea of the proof is to ``glue'' $(3,g)$-graphs together at these two vertices at distance $g/2$, also called \textit{remote vertices}. Based on similar ideas as presented in their proof, we generalize their theorem a) for all girths $g \geq 5$, b) for more pairs $(r,m)$ and c) by improving their bound if a stricter condition is met.

\begin{thm}\label{thm:genUpperBound}
    If there exists an $\rg$-graph of order $n_g$ with $g\geq5$ and $s$ vertices at pairwise distance at least $\ceil{g/2}$, then there exists an $\rmg$-graph of order
	
    \[k(n_g-s) + 
	\begin{cases}
		s & \text{if } m=rk,\\
		n_{g} +s&\text{if } m = rk+1  \text{ and }  s  \text{  is even},\\
		n_{g} + (l-1) (n_g - s) & \parbox{22em}{if $m = rk + l(r-1)$ for $l \geq 1$, $s$ is even and there exist $s/2$ edges at pairwise distance at least $\ceil{g/2}$.}
	\end{cases}
	\]
\end{thm}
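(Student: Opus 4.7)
The plan is to prove the theorem by a direct construction for each of the three cases, in each case starting from copies of $G$ and combining them via vertex identifications and, where needed, edge deletions or additions. The pairwise-distance hypotheses on the $s$ vertices (and, in case~3, the $s/2$ edges) will serve exactly to guarantee that no short cycles are introduced, and the degree bookkeeping will ensure every vertex ends up with degree $r$ or $m$.

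First, for case~1 ($m = rk$) I would take $k$ disjoint copies of $G$ and identify the $s$ remote vertices coordinate-wise, merging $k$ copies of each remote vertex into a single vertex. Because the $s$ remote vertices of $G$ are at pairwise distance at least $\lceil g/2 \rceil \geq 3$, they are non-adjacent in each copy, so the identification creates no parallel edges; the $s$ merged vertices then have degree $rk = m$, every other vertex keeps degree $r$, and the total order is $k(n_g - s) + s$. Any cycle that is not contained in a single copy must use at least two merged vertices and therefore splits into two intra-copy segments of length at least $\lceil g/2 \rceil$ each, giving total length at least $2\lceil g/2 \rceil \geq g$.

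For case~3 ($m = rk + l(r-1)$, $l \geq 1$, $s$ even, $s/2$ far-apart edges) I would build on case~1: to the construction of $k$ copies glued at the $s$ remote vertices $u_1,\dots,u_s$, attach $l$ further copies of $G$, each one edge-aligned with the main construction. Concretely, in each additional copy, label the $s$ endpoints of the $s/2$ distant matching edges by $u_1,\dots,u_s$, delete these $s/2$ matching edges, and identify each endpoint $u_q$ with the corresponding merged vertex in the main construction. After each such attachment the degree of $u_q$ increases by $r-1$ (not $r$, because the deleted matching edge removed one of its would-be neighbors), so after $l$ attachments the merged vertices have degree $rk + l(r-1) = m$, while all other vertices remain of degree $r$. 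For girth, the two endpoints of a deleted matching edge are at distance at least $g-1$ in the modified copy (any cycle through that edge had length at least $g$ in $G$), and endpoints of distinct matching edges are at distance at least $\lceil g/2 \rceil$ by assumption; combined with the distance bound at the $u_q$ in the main construction, any cross-copy cycle still has length at least $g$. The vertex count works out to $k(n_g - s) + s + l(n_g - s) = k(n_g - s) + n_g + (l-1)(n_g - s)$.

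Case~2 ($m = rk + 1$, $s$ even) uses a similar philosophy but with a single adjunct copy $G'$ attached as a disjoint component (contributing $n_g$ new vertices), plus $s$ bridge edges pairing the merged $u_q$'s with the remote vertices $w_1,\dots,w_s$ of $G'$. The bridges immediately lift each $u_q$ to degree $rk + 1$, but leave each $w_q$ at degree $r+1$; to restore the correct degree I would use the evenness of $s$ to pair the $w_q$'s and swap out a carefully chosen matching of $s/2$ edges inside $G'$ so that every affected vertex ends at degree $r$, while keeping the pairwise distances of the re-routed endpoints large enough to avoid short cycles. The resulting order is $k(n_g - s) + s + n_g$, as required. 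Across all three cases the main obstacle is the girth verification: each identification, deletion, or bridge addition must be shown not to create a cycle shorter than $g$, and this is precisely where the pairwise-distance hypotheses on the $s$ vertices and the $s/2$ edges are consumed.
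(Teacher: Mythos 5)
Your Cases 1 and 3 follow essentially the same route as the paper: gluing $k$ copies at the $s$ remote vertices, and then (for Case 3) identifying the merged vertices with the endpoints of the $s/2$ deleted far-apart edges in $l$ further copies, with the same distance-based girth analysis and the same degree count $rk+l(r-1)$. Those two cases are fine.

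Case 2, however, has a genuine gap. You attach an intact disjoint copy $G'$ via $s$ bridge edges $\edge{u_q}{w_q}$, which raises each remote vertex $w_q$ of $G'$ to degree $r+1$, and you propose to repair this by deleting ``a carefully chosen matching of $s/2$ edges inside $G'$'' so that every affected vertex returns to degree $r$. For $s/2$ deleted edges to lower the degree of each of the $s$ vertices $w_1,\ldots,w_s$ by exactly one, every deleted edge would have to have \emph{both} endpoints among the $w_q$; but the $w_q$ are at pairwise distance at least $\ceil{g/2}\geq 3$ and hence pairwise non-adjacent, so no such matching exists. Any workaround (deleting $s$ pendant edges at the $w_q$ and re-pairing the orphaned endpoints, say) either pushes the degree defect onto new vertices or requires adding edges between vertices that are only guaranteed to be at distance $\ceil{g/2}-2$ apart, which can create cycles shorter than $g$. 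The paper's construction sidesteps this entirely: it never adds bridges into an intact copy. Instead it deletes $s/2$ pairwise far-apart edges $\edge{v_j}{w_j}$ inside one copy $G_1$ (chosen incident to remote vertices so they are at pairwise distance at least $\ceil{g/2}-2$), attaches a new pendant vertex to each of the $s$ endpoints so that $v_j,w_j$ regain degree $r$, and then identifies these $s$ new degree-one vertices -- which are at pairwise distance at least $\ceil{g/2}$ -- with the remote vertices of the $k$ remaining copies, giving them degree $rk+1=m$. You would need to replace your Case 2 mechanism with something of this kind for the argument to go through.
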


\begin{proof}
	We divide the proof into three cases.
	\begin{itemize}
		\item  Case 1: $m=rk$. Let  $G_{1}$ and $G_{2}$ be two copies of an $(r,g)$-graph of order $n_{g}$ having $s$ vertices $v_{i,1}, v_{i,2}, \ldots, v_{i,s}$ at pairwise distance at least $\ceil{g/2}$ for $i \in \{1,2\}$, respectively.
        Construct a graph $\Gbireg$ by taking $G_1$ and $G_2$ and identifying  for each $j \in \{1, \ldots, s\}$ the vertex $v_{1,j} \in \V(\G_1)$ with $v_{2,j} \in \V(\G_2)$ and call this new vertex $x_j$. Let $X = \{x_1, \ldots x_s\}$. Suppose $\cyc$ is a cycle in $\Gbireg$. We will now show that $\cyc$ has length at least $g$. If $\cyc$ passes through at most one vertex of $X$, then this cycle is either completely contained in $G_1$ or $G_2$, because $X$ is a cut-set of $\Gbireg$, and therefore has length at least $g$. If $\cyc$ passes through at least two vertices of $X$, then the cycle consists of at least two paths of length at least $\ceil{g/2}$, and hence $\cyc$ has length at least $g$.
		
		Now let $\Gbireg$ be a graph formed by identifying $k$ copies, where the $i$-th copy is labelled $G_{i}$, at remote vertices $v_{i,1}, \ldots , v_{i,s}$ in $G_{i}$, calling the identified vertices $x_1, x_2, \ldots x_s$ as before. Since each of the graphs $G_{i}$ is $r$-regular, the identified vertices $x_1, x_2, \ldots x_s$ have degree $m = rk$. Applying the same shortest cycle analysis as above to each pair $(G_{i}, G_{j})$, it follows that the girth of $\Gbireg$ is also at least $g$,
		and $\Gbireg$ has order $k n_g-s(k-1)=k(n_g-s)+s$, with $s$ vertices of degree $m=rk$ and $k(n_{g} - s)$ vertices of degree $r$.

        \item  Case 2: $m=rk+1$ and $s$ is even. Take $k+1$ copies of an $(r,g)$-graph of order $n_{g}$ with $s$ vertices at pairwise distance at least $\ceil{g/2}$, and label the $i$-th copy as $G_{i}$. Consider $s/2$ edges $\edge{v_{1}}{w_{1}}, \ldots, \edge{v_{s/2}}{w_{s/2}}$ at pairwise distance at least $\ceil{g/2}-2$ in $G_{1}$ and delete these. These edges exist; consider, for example, an incident edge at $s/2$ of the remote vertices. Since the remote vertices have a pairwise distance of at least $\ceil{g/2}$, all these edges have a pairwise distance of at least $\ceil{g/2}-2$. Now, add $s$ vertices $v_1',w_1',\ldots,v_{s/2}',w_{s/2}'$ to $G_{1}$ such that $v_j'$ becomes a neighbor of $v_j$, and that $w_j'$ becomes a neighbor of $w_j$, for $j \in \{1,\ldots,s/2\}$. Note that the distance between any two of these $s$ vertices is at least $\ceil{g/2}$.
        
		Construct a graph $\Gbireg$ by identifying each pair $v_j', w_j'$ with two remote vertices $x_{i,j}$ and $y_{i,j}$ in each of the $k$ remaining graphs $G_{i}$. The graph $\Gbireg$ only has cycles of length $g$ or larger due to the distance requirements between the deleted edges and identified vertices. $\Gbireg$ has girth $g$ (and not larger) since it contains the cycles of length $g$ from $\G_i$ with $i > 1$. Furthermore, $\Gbireg$ has $s$ vertices of degree $rk+1$ and $k(n_{g} - s)+n_{g}$ vertices of degree $r$ for a total order of 
		$k(n_g-s)+n_g+s$.
		
		\item Case 3: $m=rk+l(r-1), l \geq 1$, $s$ is even and there exist $s/2$ edges at pairwise distance at least $\ceil{g/2}$.
        Take $k$ copies of an $(r,g)$-graph of order $n_{g}$ having $s$ vertices at pairwise distance at least $\ceil{g/2}$ and having $s/2$ edges at pairwise distance at least $\ceil{g/2}$ for $i \in \{1,\ldots,k\}$ and label the $i$-th copy as $G_{i}$. Additionally, take $l$ copies of this $(r,g)$-graph and label the $i$-th copy as $H_{i}$. In each $H_{i}$, let $\edge{v_{i,1}}{w_{i,1}}, \ldots, \edge{v_{i,s/2}}{w_{i,s/2}}$ be the $s/2$ edges at pairwise distance at least $\ceil{g/2}$. Delete these edges, and call each new graph $H'_{i}$.  Notice that all the vertices in each $H'_{i}$ have degree $r$ except the vertices incident to the deleted edges, which have degree $r-1$. Since $H_{i}$ has girth $g$, each pair of vertices $(v_{i,1},w_{i,1}), \ldots, (v_{i,s/2},w_{i,s/2})$ is now at distance at least $g-1$ from each other.

        Now, suppose that $u_{i,1}, \ldots, u_{i,s}$ are the $s$ vertices at pairwise distance at least $\ceil{g/2}$ in $G_{i}$. Construct a new graph $\Gbireg$ by identifying for $j\in \{1,\ldots,s/2\}$ all the vertices $v_{1,j}, \ldots, v_{l,j}, u_{1,j}, \ldots, u_{k,j}$, calling the new vertex $x_{j}$ and identifying all $w_{1,j}, \ldots, w_{l,j}, u_{1,s/2+j}, \ldots, u_{k,s/2+j}$, calling the new vertex $y_{j}$. As in the previous cases, we obtain a graph of girth $g$, but in this case, the $n_{g} - s$ vertices in each copy other than $x_j$ and $y_j$ have degree $r$, and $x_j$ and $y_j$ have degree $m=rk+l(r-1)$. It follows that $\Gbireg$ is an $\rmg$-graph of order $(n_{g}-s)(k+l)+s =  k(n_{g}-s) + n_g + (l-1) (n_g - s) $.

	\end{itemize}
\end{proof}

\subsection{Experiments and results}
\cref{thm:genUpperBound} depends on the number of vertices $s$ at pairwise distance $\ceil{g/2}$ from each other and also on the existence of $s/2$ edges at the same pairwise distance in case $m=rk+l(r-1)$ for $l\geq1$. Therefore we gathered $\rg$-cages or \textit{record holders} -- which are the smallest known $\rg$-graphs for specific $\rg$ -- and computed the maximum amount of vertices these graphs have at pairwise distance $\ceil{g/2}$ and the maximum amount of edges at the same pairwise distance. We limited ourselves to cages and record holders for $r\in \{3,4,5\}$ with $g\leq20$ for $r=3$ and $g\leq12$ for $r=4,5$ to stay in the same ranges of $\rg$ as the improvements of the constructions from \cref{sec:constr}. The used $\rg$-graphs are available (in \texttt{sparse6} format) at \cite{repo}. Using these $\rg$-graphs we were able to improve $\numImprovedUpBoundsThm$ upper bounds, besides the $\numImprovedUpBounds$ we improved in \cref{sec:constr}. These improvements can be found in \cref{sec:tables}. 

Lastly we note that our implementation for exactly computing the maximum amount of vertices and edges at minimum pairwise distance $\dmin$ was deemed too expensive for graphs of larger orders. Consequently, we implemented two approximate algorithms which yield a large, but not necessarily maximal, amount of vertices and edges at the minimum pairwise distance. For graph of order smaller than $728$, we used the exact algorithm and for graphs of order $728$ and larger, we ran the two approximate algorithms and selected the largest number that came out of these two.

\section{Conclusion}\label{sec:conc}

In this paper we presented computational methods for finding $\rmg$-graphs, which led to exhaustive lists of $\rmg$-cages for $\numExhLists$ triples of $(r,m,g)$, an improvement of the lower bound of $n\rmgIn{4}{5}{7}$ from 66 to 69 and the improvement of $\numImprovedUpBoundsTotal$ upper bounds. In this conclusion we present some possibilities for further work. First, one could analyze the exhaustive lists further to gain more insight into $\rmg$-graphs, and one can try to describe patterns which could potentially lead to the discovery of new infinite families of small bi-regular graphs with given girth. We again draw attention to the fact that typically there are quite a lot of bi-regular cages, which might suggest the existence of a modification step that one can apply to a bi-regular cage to obtain a different bi-regular cage. If so, it could be interesting to analyze the consequences of this modification step for the original Cage Problem as well.

Further, we note that the lower and upper bound for $n\rmgIn{3}{8}{8}$ are 74 and 75, respectively. Executing the exhaustive generation algorithm for order $74$ would thus conclude the exact order $n\rmgIn{3}{8}{8}$, making it a particularly interesting case. Yet, the computation was deemed too expensive with our algorithm. 

Moreover, the exhaustive generation algorithm might be an interesting starting point to design an algorithm to generate graphs of girth $g$ with an arbitrarily large degree set, i.e. $(\{r,m_1,\ldots,m_k\};g)$-graphs.
Furthermore, adapting the generation algorithm to planar or bipartite $\rmg$-graphs and \textit{girth-biregular} or \textit{girth-regular} graphs, as respectively discussed in \cite{biregPlanarCages}, \cite{OnBiregBipOfSmallExcess}, \cite{girthBiregular} and \cite{girth-regular} might be interesting as well.

\section*{Acknowledgements}

We thank Primož Potočnik for the support with the lists of regular graphs, which we used as seeds for our constructions from \cref{sec:constr}. In particular we thank him for making the list of 5-regular graphs available. We also thank Gunnar Brinkmann for providing us with an algorithm that we used as a sanity check for verifying the correctness of our implementation of the generation algorithm discussed in the current paper.

The computational resources and services used in this work were provided by the VSC (Flemish Supercomputer Center), funded by the Research Foundation - Flanders (FWO) and the Flemish Government – department EWI.

Jan Goedgebeur and Tibo Van den Eede are supported by an FWO grant with grant number G0AGX24N and by Internal Funds of KU Leuven. Jorik Jooken is supported by an FWO grant with grant number 1222524N.

\bibliographystyle{abbrv}
\typeout{}
\bibliography{references}

\newpage

\appendix

\section{Sanity checks implementation of exhaustive generation algorithm}\label{sec:sanity}
In this section we explain which extra steps we took to ensure correctness of the implementation of the exhaustive generation algorithm as described in \cref{sec:biregBacktrackAlgo}. This implementation is available at \url{https://github.com/tiboat/biregGirthGraphs} \cite{repo}. First, we implemented the backtracking algorithm for $\rg$-graphs from~\cite{McKay1998} with the pruning rule from~\cite{Exoo2011} and checked the correctness for the pairs $\rg$ as mentioned in \cref{tab:checks_cage_gen}. The amount of resulting graphs in the output was compared with the data from~\cite{regGraphs:data}, generated with \texttt{GENREG}~\cite{Meringer1999}, which is a generator for regular graphs. In each case we obtained the same amount of graphs.

Starting from the algorithm for $\rg$-graphs we made changes to obtain the one for $\rmg$-graphs. The output, for smaller orders, was compared with two more general generators: $\geng$ from $\nauty$~\cite{nauty} and $\multigraph$, an unpublished generator written by Gunnar Brinkmann. The latter is a generalization of the generator for cubic graphs described in~\cite{fastGenCubic}. \cref{tab:geng_mg_bg2} shows the cases we tested for $r,m,g$ and order $n$ along with the number of pairwise non-isomorphic $\rmg$-graphs $\size{\rmgnFam{r}{m}{g}{n}}$. When $\geng$ required more than 20 hours for generation, the execution was terminated. This occurred for $\rmgIn{3}{4}{7}$ with $n=29$, $\rmgIn{3}{5}{6}$ with $n=24$ and $\rmgIn{3}{6}{5}$ with $n=21$. We note that we successfully completed the generation for these cases with $\multigraph$. 
In each case mentioned in \cref{tab:geng_mg_bg2} the number of pairwise non-isomorphic $\rmg$-graphs obtained by $\geng$ or $\multigraph$ was in complete agreement with the output of our generator. 
Finally, we stress that the specialized generation algorithm discussed in the current paper was indeed worth developing, since many cases that we solved in the current paper could not be solved by $\geng$ or $\multigraph$.

\begin{table}[htbp]
    \centering
    \begin{minipage}{0.48\textwidth}
        \centering
        \begin{tabular}{r r r r }
            \toprule
            $r$ & $g$ & $n$ & number of non-isomorphic $\rg$-graphs \\
            \midrule
            3 & 5 & 10 & 1 \\
            3 & 5 & 12 & 2 \\
            3 & 5 & 14 & 9 \\
            3 & 5 & 16 & 49 \\
            3 & 6 & 14 & 1 \\
            3 & 6 & 16 & 1 \\
            3 & 6 & 18 & 5 \\
            3 & 6 & 20 & 32 \\
            3 & 7 & 24 & 1 \\
            3 & 7 & 26 & 3 \\
            3 & 8 & 30 & 1 \\
            3 & 8 & 32 & 0 \\
            4 & 5 & 19 & 1 \\
            \bottomrule
        \end{tabular}
        \caption{The number of non-isomorphic $\rg$-graphs of order $n$.}
        \label{tab:checks_cage_gen}
    \end{minipage}%
    \hfill
    \begin{minipage}{0.48\textwidth}
        \centering
        \begin{tabular}{r r r r r}
            \toprule
            $r$ & $m$ & $g$ & $n$ & $\size{\rmgnFam{r}{m}{g}{n}}$ \\
            \midrule
            3 & 4 & 5 & 13 & 4 \\
            3 & 4 & 5 & 14 & 14 \\
            3 & 4 & 5 & 15 & 149 \\
            3 & 4 & 5 & 16 & 1 498 \\
            3 & 4 & 5 & 17 & 20 398 \\
            3 & 4 & 6 & 18 & 2 \\
            3 & 4 & 6 & 19 & 6 \\
            3 & 4 & 6 & 20 & 55 \\
            3 & 4 & 6 & 21 & 304 \\
            3 & 4 & 6 & 22 & 3 269 \\
            3 & 4 & 7 & 29 & 11 \\
            3 & 5 & 5 & 16 & 20 \\
            3 & 5 & 5 & 18 & 1 139 \\
            3 & 5 & 5 & 20 & 106 562 \\
            3 & 5 & 6 & 22 & 6 \\
            3 & 5 & 6 & 24 & 753 \\
            3 & 6 & 5 & 19 & 92 \\
            3 & 6 & 5 & 20 & 99 \\
            3 & 6 & 5 & 21 & 10 568 \\
            4 & 5 & 5 & 21 & 3 \\
            4 & 5 & 5 & 22 & 51 \\
            \bottomrule
        \end{tabular}
        \caption{The number of non-isomorphic $\rmg$-graphs of order $n$.}
        \label{tab:geng_mg_bg2}
    \end{minipage}
\end{table}

\newpage

\section[Lower and upper bounds n({r,m};g)]{Lower and upper bounds $n\rmg$}\label{sec:tables}

The following tables show the best known lower bounds and  upper bounds for $n\rmg$, resulting from a literature study. 
More precisely, the bounds are shown for $r \in \{3,4,5\}$, $m \in \{r+1, \ldots, 18\}$ for $r=3$, $m \in \{r+1, \ldots, 13\}$ for $r \in \{4,5\}$, $g \in \{5,\ldots,20\}$ for $r=3$ and $g \in \{5,\ldots,12\}$ for $r \in \{4,5\}$.
For $\rmg$-cages where the exact order is known, the row is marked in gray. An upper bound is underlined 
in case this one was obtained by applying Theorem~2.2 from \cite{monotoDn-cages}, which states that $n\rmg$ increases monotonically with $g$ (and if $r$ and $m$ are odd by additionally applying the handshaking lemma in case the upper bound is odd). Furthermore, an improvement in lower or upper bound is denoted by $x \rightarrow y$, where $x$ is the lower or upper bound derived from the literature and $y$ the improved lower or upper bound obtained in this paper. Additionally, in case of an improved upper bound the column ``Source improvement'' states which theorem or construction was applied to obtain this improved bound. In case of using a construction, the abbreviation of the list of $r$-regular graphs, as introduced in \cref{tab:reg_lists}, is reported. Lastly, we clarify an upper bound from \cite{Chartrand1981}, as shown for e.g.\ $\{r,m\}=\{4,11\}$. In \cite{Chartrand1981} the existence of $\rmg$-graphs was proven by taking the union of an $\rg$-graph and an $(m,g)$-graph. Hence we can derive the following upper bound: $n\rmg \leq n\rg + n(m,g)$.

\begin{table}[h]
\centering
\scalebox{0.95}{
\begin{tabular}{r r r | r l | r l l}
\toprule
$r$ & $m$ & $g$ & $n\rmg \geq$ & Source & $n\rmg \leq$ & Source & Source improvement \\ 
\midrule
\rowcolor{gray!20}
3 & 4 & 5 & 13 & Cor. 2 \cite{ConstructionsBireg} & 13 & Cor. 2 \cite{ConstructionsBireg} & \\
\rowcolor{gray!20}
3 & 4 & 6 & 18 & Th. 2 \cite{Yang2003} & 18 & Th. 2 \cite{Yang2003} & \\
\rowcolor{gray!20}
3 & 4 & 7 & 29 & Cor. 2 \cite{ConstructionsBireg} & 29 & Cor. 2 \cite{ConstructionsBireg} & \\
\rowcolor{gray!20}
3 & 4 & 8 & 39 & Constr. 3.3 \cite{biregGirth8} & 39 & Constr. 3.3 \cite{biregGirth8} & \\
\rowcolor{gray!20}
3 & 4 & 9 & 61 & Th. 1.2 \cite{biregOdd} & 61 & Th. 1.2 \cite{biregOdd} & \\
\rowcolor{gray!20}
3 & 4 & 10 & 82 & Th. 4.1 \cite{biregEven} & 82 & Th. 4.1 \cite{biregEven} & \\
\rowcolor{gray!20}
3 & 4 & 11 & 125 & Cor. 2 \cite{ConstructionsBireg} & 125 & Cor. 2 \cite{ConstructionsBireg} & \\
3 & 4 & 12 & 163 & Th. 2.3 \cite{Aurajo-Pardo2007} & 220 & Th. 2.2 \cite{biregGirth8} & \\
3 & 4 & 13 & 253 & Moore bound & 383 & Th. 1 \cite{ConstructionsBireg} & \\
3 & 4 & 14 & 328 & Th. 2.3 \cite{Aurajo-Pardo2007} & 619 $\rightarrow$ 544 & Th. 1 \cite{ConstructionsBireg} & Constr.~\ref{constr:addEdge} \triVT\\
3 & 4 & 15 & 509 & Moore bound & 959 & Th. 1 \cite{ConstructionsBireg} & \\
3 & 4 & 16 & 659 & Th. 2.3 \cite{Aurajo-Pardo2007} & 1 852 $\rightarrow$ 1 240 & Th. 2.2 \cite{biregGirth8} & Constr.~\ref{constr:addEdge} \triVT\\
3 & 4 & 17 & 1 021 & Moore bound & 61 163 $\rightarrow$ 2 880 & Th. 3.1 \cite{semiRegCages} & Constr.~\ref{constr:addEdge} \triCay\\
3 & 4 & 18 & 1 320 & Th. 2.3 \cite{Aurajo-Pardo2007} & 124 304 $\rightarrow$ 5 120 & Th. 3.1 \cite{semiRegCages} & Th.~\ref{thm:genUpperBound}\\
3 & 4 & 19 & 2 045 & Moore bound & 249 023 $\rightarrow$ \underline{8 647} & Th. 3.1 \cite{semiRegCages} & \\
3 & 4 & 20 & 2 643 & Th. 2.3 \cite{Aurajo-Pardo2007} & 504 242 $\rightarrow$ 8 648 & Th. 3.1 \cite{semiRegCages} & Constr.~\ref{constr:addEdge} \triAT\\
\rowcolor{gray!20}
3 & 5 & 5 & 16 & Th. 5 \cite{Dn-cages} & 16 & Th. 5 \cite{Dn-cages} & \\
\rowcolor{gray!20}
3 & 5 & 6 & 22 & Th. 2 \cite{Yang2003} & 22 & Th. 2 \cite{Yang2003} & \\
\rowcolor{gray!20}
3 & 5 & 7 & 36 & Th. 7 \cite{Dn-cages} & 36 & Th. 7 \cite{Dn-cages} & \\
\rowcolor{gray!20}
3 & 5 & 8 & 48 & Constr. 3.3 \cite{biregGirth8} & 48 & Constr. 3.3 \cite{biregGirth8} & \\
\rowcolor{gray!20}
3 & 5 & 9 & 76 & Th. 1.2 \cite{biregOdd} & 76 & Th. 1.2 \cite{biregOdd} & \\
3 & 5 & 10 & 98 & Th. 2.3 \cite{Aurajo-Pardo2007} & 126 & Th. 2.2 (i) \cite{Aurajo-Pardo2007} & \\
3 & 5 & 11 & 156 & Moore bound & \underline{228} & Th. 2.1 \cite{monotoDn-cages} & \\
3 & 5 & 12 & 200 & Th. 2.3 \cite{Aurajo-Pardo2007} & 230 & Th. 2.2 \cite{biregGirth8} & \\
3 & 5 & 13 & 316 & Moore bound & \underline{692} $\rightarrow$ \underline{672} & Th. 2.1 \cite{monotoDn-cages} & \\
\bottomrule
\end{tabular}
}
\caption{Lower and upper bound for $n\rmg$ with $r=3$ and $m$ from 4 till 5.}
\end{table}

\begin{table}[h]
\centering
\scalebox{0.95}{
\begin{tabular}{r r r | r l | r l l}
\toprule
$r$ & $m$ & $g$ & $n\rmg \geq$ & Source & $n\rmg \leq$ & Source & Source improvement \\ 
\midrule
3 & 5 & 14 & 402 & Th. 2.3 \cite{Aurajo-Pardo2007} & 694 $\rightarrow$ 674 & Cor. 4 \cite{semiCubic2023} & Constr.~\ref{constr:odd} \triVT\\
3 & 5 & 15 & 636 & Moore bound & \underline{1 872} $\rightarrow$ 1 538 & Th. 2.1 \cite{monotoDn-cages} & Constr.~\ref{constr:odd} \triCay\\
3 & 5 & 16 & 808 & Th. 2.3 \cite{Aurajo-Pardo2007} & 1 874 $\rightarrow$ 1 730 & Th. 2.2 \cite{biregGirth8} & Constr.~\ref{constr:odd} \triSS\\
3 & 5 & 17 & 1 276 & Moore bound & \underline{261 664} $\rightarrow$ \underline{5 184} & Th. 2.1 \cite{monotoDn-cages} & \\
3 & 5 & 18 & 1 618 & Th. 2.3 \cite{Aurajo-Pardo2007} & 261 666 $\rightarrow$ 5 186 & Th. 2.2 (i) \cite{Aurajo-Pardo2007} & Constr.~\ref{constr:odd} \triSS\\
3 & 5 & 19 & 2 556 & Moore bound & \underline{1 047 752} & Th. 2.1 \cite{monotoDn-cages} & \\
3 & 5 & 20 & 3 240 & Th. 2.3 \cite{Aurajo-Pardo2007} & 1 047 754 & Th. 2.2 \cite{biregGirth8} & \\
\rowcolor{gray!20}
3 & 6 & 5 & 19 & Th. 5 \cite{Dn-cages} & 19 & Th. 5 \cite{Dn-cages} & \\
\rowcolor{gray!20}
3 & 6 & 6 & 26 & Th. 2 \cite{Yang2003} & 26 & Th. 2 \cite{Yang2003} & \\
\rowcolor{gray!20}
3 & 6 & 7 & 43 & Th. 7 \cite{Dn-cages} & 43 & Th. 7 \cite{Dn-cages} & \\
\rowcolor{gray!20}
3 & 6 & 8 & 55 & Th. 2.3 \cite{Aurajo-Pardo2007} & 55 & Cor. 7 \cite{ConstructionsBireg} & \\
\rowcolor{gray!20}
3 & 6 & 9 & 91 & Th. 9 \cite{Dn-cages} & 91 & Th. 9 \cite{Dn-cages} & \\
3 & 6 & 10 & 115 & Th. 2.3 \cite{Aurajo-Pardo2007} & 138 $\rightarrow$ 135 & Th. 3 (i) \cite{ConstructionsBireg} & Th.~\ref{thm:genUpperBound}\\
\rowcolor{gray!20}
3 & 6 & 11 & 187 & Th. 1.2 \cite{biregOdd} & 187 & Th. 1.2 \cite{biregOdd} & \\
3 & 6 & 12 & 235 & Th. 2.3 \cite{Aurajo-Pardo2007} & 250 $\rightarrow$ 243 & Th. 3 (i) \cite{ConstructionsBireg} & Th.~\ref{thm:genUpperBound}\\
3 & 6 & 13 & 379 & Moore bound & 542 $\rightarrow$ 487 & Th. 3 (iii) \cite{ConstructionsBireg} & Constr.~\ref{constr:even} \triVT\\
3 & 6 & 14 & 475 & Th. 2.3 \cite{Aurajo-Pardo2007} & 694 & Cor. 4 \cite{semiCubic2023} & \\
3 & 6 & 15 & 763 & Moore bound & 1 238 $\rightarrow$ 1 230 & Th. 3 (iii) \cite{ConstructionsBireg} & Th.~\ref{thm:genUpperBound}\\
3 & 6 & 16 & 955 & Th. 2.3 \cite{Aurajo-Pardo2007} & 1 918 $\rightarrow$ 1 898 & Th. 3 (i) \cite{ConstructionsBireg} & Th.~\ref{thm:genUpperBound}\\
3 & 6 & 17 & 1 531 & Moore bound & 131 066 $\rightarrow$ 3 025 & Th. 3 (iii) \cite{ConstructionsBireg} & Constr.~\ref{constr:even} \triAT\\
3 & 6 & 18 & 1 915 & Th. 2.3 \cite{Aurajo-Pardo2007} & 261 726 $\rightarrow$ 5 097 & Th. 3 (i) \cite{ConstructionsBireg} & Th.~\ref{thm:genUpperBound}\\
3 & 6 & 19 & 3 067 & Moore bound & 524 282 $\rightarrow$ 8 623 & Th. 3 (iii) \cite{ConstructionsBireg} & Th.~\ref{thm:genUpperBound}\\
3 & 6 & 20 & 3 835 & Th. 2.3 \cite{Aurajo-Pardo2007} & 1 047 846 $\rightarrow$ 10 714 & Th. 3 (i) \cite{ConstructionsBireg} & Th.~\ref{thm:genUpperBound}\\
\rowcolor{gray!20}
3 & 7 & 5 & 22 & Th. 5 \cite{Dn-cages} & 22 & Th. 5 \cite{Dn-cages} & \\
\rowcolor{gray!20}
3 & 7 & 6 & 30 & Th. 1 \cite{Hanson1992} & 30 & Th. 1 \cite{Hanson1992} & \\
\rowcolor{gray!20}
3 & 7 & 7 & 50 & Th. 7 \cite{Dn-cages} & 50 & Th. 7 \cite{Dn-cages} & \\
\rowcolor{gray!20}
3 & 7 & 8 & 66 & Constr. 3.3 \cite{biregGirth8} & 66 & Constr. 3.3 \cite{biregGirth8} & \\
\rowcolor{gray!20}
3 & 7 & 9 & 106 & Th. 9 \cite{Dn-cages} & 106 & Th. 9 \cite{Dn-cages} & \\
3 & 7 & 10 & 134 & Th. 2.3 \cite{Aurajo-Pardo2007} & 182 & Th. 2.2 (i) \cite{Aurajo-Pardo2007} & \\
\rowcolor{gray!20}
3 & 7 & 11 & 218 & Th. 1.2 \cite{biregOdd} & 218 & Th. 1.2 \cite{biregOdd} & \\
3 & 7 & 12 & 272 & Th. 2.3 \cite{Aurajo-Pardo2007} & 334 & Th. 2.2 \cite{biregGirth8} & \\
3 & 7 & 13 & 442 & Moore bound & \underline{1 040} $\rightarrow$ 810 & Th. 2.1 \cite{monotoDn-cages} & Th.~\ref{thm:genUpperBound}\\
3 & 7 & 14 & 550 & Th. 2.3 \cite{Aurajo-Pardo2007} & 1 042 $\rightarrow$ 974 & Cor. 4 \cite{semiCubic2023} & Constr.~\ref{constr:odd} \triVT\\
3 & 7 & 15 & 890 & Moore bound & \underline{2 786} $\rightarrow$ 1 850 & Th. 2.1 \cite{monotoDn-cages} & Th.~\ref{thm:genUpperBound}\\
3 & 7 & 16 & 1 104 & Th. 2.3 \cite{Aurajo-Pardo2007} & 2 788 $\rightarrow$ 2 690 & Th. 2.2 \cite{biregGirth8} & Constr.~\ref{constr:odd} \triSS\\
3 & 7 & 17 & 1 786 & Moore bound & \underline{392 466} $\rightarrow$ \underline{6 048} & Th. 2.1 \cite{monotoDn-cages} & \\
3 & 7 & 18 & 2 214 & Th. 2.3 \cite{Aurajo-Pardo2007} & 392 468 $\rightarrow$ 6 050 & Th. 2.2 (i) \cite{Aurajo-Pardo2007} & Constr.~\ref{constr:odd} \triAT\\
3 & 7 & 19 & 3 578 & Moore bound & \underline{1 571 582} $\rightarrow$ 12 948 & Th. 2.1 \cite{monotoDn-cages} & Th.~\ref{thm:genUpperBound}\\
3 & 7 & 20 & 4 432 & Th. 2.3 \cite{Aurajo-Pardo2007} & 1 571 584 $\rightarrow$ 16 090 & Th. 2.2 \cite{biregGirth8} & Th.~\ref{thm:genUpperBound}\\
\rowcolor{gray!20}
3 & 8 & 5 & 25 & Cor. 2 \cite{ConstructionsBireg} & 25 & Cor. 2 \cite{ConstructionsBireg} & \\
\rowcolor{gray!20}
3 & 8 & 6 & 34 & Th. 2 \cite{Yang2003} & 34 & Th. 2 \cite{Yang2003} & \\
\rowcolor{gray!20}
3 & 8 & 7 & 57 & Cor. 2 \cite{ConstructionsBireg} & 57 & Cor. 2 \cite{ConstructionsBireg} & \\
3 & 8 & 8 & 74 & Th. 3.3 \cite{biregGirth8} & 75 & Th. 3.4 \cite{biregGirth8} & \\
\rowcolor{gray!20}
3 & 8 & 9 & 121 & Th. 9 \cite{Dn-cages} & 121 & Th. 9 \cite{Dn-cages} & \\
\bottomrule
\end{tabular}
}
\caption{Lower and upper bound for $n\rmg$ with $r=3$ and $m$ from 5 till 8.}
\end{table}

\begin{table}[h]
\centering
\scalebox{0.95}{
\begin{tabular}{r r r | r l | r l l}
\toprule
$r$ & $m$ & $g$ & $n\rmg \geq$ & Source & $n\rmg \leq$ & Source & Source improvement \\ 
\midrule
3 & 8 & 10 & 151 & Th. 2.3 \cite{Aurajo-Pardo2007} & 206 & Cor. 2 \cite{semiCubic2023} & \\
\rowcolor{gray!20}
3 & 8 & 11 & 249 & Cor. 2 \cite{ConstructionsBireg} & 249 & Cor. 2 \cite{ConstructionsBireg} & \\
3 & 8 & 12 & 308 & Th. 2.3 \cite{Aurajo-Pardo2007} & 374 & Cor. 3 \cite{semiCubic2023} & \\
3 & 8 & 13 & 505 & Moore bound & 765 $\rightarrow$ 673 & Th. 1 \cite{ConstructionsBireg} & Constr.~\ref{constr:even} \triVT\\
3 & 8 & 14 & 623 & Th. 2.3 \cite{Aurajo-Pardo2007} & 1 040 & Cor. 4 \cite{semiCubic2023} & \\
3 & 8 & 15 & 1 017 & Moore bound & 1 917 $\rightarrow$ 1 729 & Th. 1 \cite{ConstructionsBireg} & Constr.~\ref{constr:even} \triSS\\
3 & 8 & 16 & 1 252 & Th. 2.3 \cite{Aurajo-Pardo2007} & 2 876 & Th. 1 \cite{semiCubic2023} & \\
3 & 8 & 17 & 2 041 & Moore bound & 261 725 $\rightarrow$ 5 185 & Th. 1 \cite{ConstructionsBireg} & Constr.~\ref{constr:even} \triSS\\
3 & 8 & 18 & 2 511 & Th. 2.3 \cite{Aurajo-Pardo2007} & 392 588 $\rightarrow$ 8 065 & Th. 1 \cite{semiCubic2023} & Constr.~\ref{constr:even} \triAT\\
3 & 8 & 19 & 4 089 & Moore bound & 1 047 845 & Th. 1 \cite{ConstructionsBireg} & \\
3 & 8 & 20 & 5 028 & Th. 2.3 \cite{Aurajo-Pardo2007} & 1 571 768 & Th. 1 \cite{semiCubic2023} & \\
\rowcolor{gray!20}
3 & 9 & 5 & 28 & Th. 5 \cite{Dn-cages} & 28 & Th. 5 \cite{Dn-cages} & \\
\rowcolor{gray!20}
3 & 9 & 6 & 38 & Th. 2 \cite{Yang2003} & 38 & Th. 2 \cite{Yang2003} & \\
\rowcolor{gray!20}
3 & 9 & 7 & 64 & Th. 7 \cite{Dn-cages} & 64 & Th. 7 \cite{Dn-cages} & \\
\rowcolor{gray!20}
3 & 9 & 8 & 80 & Th. 2.3 \cite{Aurajo-Pardo2007} & 80 & Cor. 7 \cite{ConstructionsBireg} & \\
\rowcolor{gray!20}
3 & 9 & 9 & 136 & Th. 9 \cite{Dn-cages} & 136 & Th. 9 \cite{Dn-cages} & \\
3 & 9 & 10 & 168 & Th. 2.3 \cite{Aurajo-Pardo2007} & 206 $\rightarrow$ 200 & Th. 3 (i) \cite{ConstructionsBireg} & Th.~\ref{thm:genUpperBound}\\
\rowcolor{gray!20}
3 & 9 & 11 & 280 & Th. 1.2 \cite{biregOdd} & 280 & Th. 1.2 \cite{biregOdd} & \\
3 & 9 & 12 & 344 & Th. 2.3 \cite{Aurajo-Pardo2007} & 374 $\rightarrow$ 360 & Th. 3 (i) \cite{ConstructionsBireg} & Th.~\ref{thm:genUpperBound}\\
\rowcolor{gray!20}
3 & 9 & 13 & 568 & Th. 1.2 \cite{biregOdd} & 568 & Th. 1.2 \cite{biregOdd} & \\
3 & 9 & 14 & 696 & Th. 2.3 \cite{Aurajo-Pardo2007} & 1 040 & Cor. 4 \cite{semiCubic2023} & \\
3 & 9 & 15 & 1 144 & Moore bound & 1 856 $\rightarrow$ 1 840 & Th. 3 (iii) \cite{ConstructionsBireg} & Th.~\ref{thm:genUpperBound}\\
3 & 9 & 16 & 1 400 & Th. 2.3 \cite{Aurajo-Pardo2007} & 2 876 $\rightarrow$ 2 836 & Th. 3 (i) \cite{ConstructionsBireg} & Th.~\ref{thm:genUpperBound}\\
3 & 9 & 17 & 2 296 & Moore bound & 196 598 $\rightarrow$ 6 488 & Th. 3 (iii) \cite{ConstructionsBireg} & Th.~\ref{thm:genUpperBound}\\
3 & 9 & 18 & 2 808 & Th. 2.3 \cite{Aurajo-Pardo2007} & 392 588 $\rightarrow$ 7 634 & Th. 3 (i) \cite{ConstructionsBireg} & Th.~\ref{thm:genUpperBound}\\
3 & 9 & 19 & 4 600 & Moore bound & 786 422 $\rightarrow$ 12 922 & Th. 3 (iii) \cite{ConstructionsBireg} & Th.~\ref{thm:genUpperBound}\\
3 & 9 & 20 & 5 624 & Th. 2.3 \cite{Aurajo-Pardo2007} & 1 571 768 $\rightarrow$ 16 052 & Th. 3 (i) \cite{ConstructionsBireg} & Th.~\ref{thm:genUpperBound}\\
\rowcolor{gray!20}
3 & 10 & 5 & 31 & Th. 5 \cite{Dn-cages} & 31 & Th. 5 \cite{Dn-cages} & \\
\rowcolor{gray!20}
3 & 10 & 6 & 42 & Th. 2 \cite{Yang2003} & 42 & Th. 2 \cite{Yang2003} & \\
\rowcolor{gray!20}
3 & 10 & 7 & 71 & Th. 7 \cite{Dn-cages} & 71 & Th. 7 \cite{Dn-cages} & \\
3 & 10 & 8 & 91 & Th. 3.3 \cite{biregGirth8} & 93 & Th. 3.4 \cite{biregGirth8} & \\
\rowcolor{gray!20}
3 & 10 & 9 & 151 & Th. 9 \cite{Dn-cages} & 151 & Th. 9 \cite{Dn-cages} & \\
3 & 10 & 10 & 186 & Th. 2.3 \cite{Aurajo-Pardo2007} & 276 $\rightarrow$ 272 & Cor. 2 \cite{semiCubic2023} & Th.~\ref{thm:genUpperBound}\\
\rowcolor{gray!20}
3 & 10 & 11 & 311 & Th. 1.2 \cite{biregOdd} & 311 & Th. 1.2 \cite{biregOdd} & \\
3 & 10 & 12 & 381 & Th. 2.3 \cite{Aurajo-Pardo2007} & 500 $\rightarrow$ 488 & Cor. 3 \cite{semiCubic2023} & Th.~\ref{thm:genUpperBound}\\
\rowcolor{gray!20}
3 & 10 & 13 & 631 & Th. 1.2 \cite{biregOdd} & 631 & Th. 1.2 \cite{biregOdd} & \\
3 & 10 & 14 & 770 & Th. 2.3 \cite{Aurajo-Pardo2007} & 1 388 $\rightarrow$ 1 361 & Cor. 4 \cite{semiCubic2023} & Constr.~\ref{constr:even} \triCay\\
3 & 10 & 15 & 1 271 & Moore bound & \underline{3 835} $\rightarrow$ 2 460 & Th. 2.1 \cite{monotoDn-cages} & Th.~\ref{thm:genUpperBound}\\
3 & 10 & 16 & 1 549 & Th. 2.3 \cite{Aurajo-Pardo2007} & 3 836 $\rightarrow$ 3 796 & Th. 1 \cite{semiCubic2023} & Th.~\ref{thm:genUpperBound}\\
3 & 10 & 17 & 2 551 & Moore bound & \underline{523 451} $\rightarrow$ 6 049 & Th. 2.1 \cite{monotoDn-cages} & Constr.~\ref{constr:even} \triAT\\
3 & 10 & 18 & 3 106 & Th. 2.3 \cite{Aurajo-Pardo2007} & 523 452 $\rightarrow$ 10 196 & Th. 1 \cite{semiCubic2023} & Th.~\ref{thm:genUpperBound}\\
3 & 10 & 19 & 5 111 & Moore bound & \underline{2 095 691} $\rightarrow$ 17 248 & Th. 2.1 \cite{monotoDn-cages} & Th.~\ref{thm:genUpperBound}\\
3 & 10 & 20 & 6 221 & Th. 2.3 \cite{Aurajo-Pardo2007} & 2 095 692 $\rightarrow$ 21 428 & Th. 1 \cite{semiCubic2023} & Th.~\ref{thm:genUpperBound}\\
\rowcolor{gray!20}
3 & 11 & 5 & 34 & Th. 5 \cite{Dn-cages} & 34 & Th. 5 \cite{Dn-cages} & \\
\bottomrule
\end{tabular}
}
\caption{Lower and upper bound for $n\rmg$ with $r=3$ and $m$ from 8 till 11.}
\end{table}

\begin{table}[h]
\centering
\scalebox{0.95}{
\begin{tabular}{r r r | r l | r l l}
\toprule
$r$ & $m$ & $g$ & $n\rmg \geq$ & Source & $n\rmg \leq$ & Source & Source improvement \\ 
\midrule
\rowcolor{gray!20}
3 & 11 & 6 & 46 & Th. 1 \cite{Hanson1992} & 46 & Th. 1 \cite{Hanson1992} & \\
\rowcolor{gray!20}
3 & 11 & 7 & 78 & Th. 7 \cite{Dn-cages} & 78 & Th. 7 \cite{Dn-cages} & \\
3 & 11 & 8 & 100 & Th. 3.3 \cite{biregGirth8} & 102 & Cor. 3.3 \cite{biregGirth8} & \\
\rowcolor{gray!20}
3 & 11 & 9 & 166 & Th. 9 \cite{Dn-cages} & 166 & Th. 9 \cite{Dn-cages} & \\
3 & 11 & 10 & 204 & Th. 2.3 \cite{Aurajo-Pardo2007} & 274 & Cor. 2 \cite{semiCubic2023} & \\
\rowcolor{gray!20}
3 & 11 & 11 & 342 & Th. 1.2 \cite{biregOdd} & 342 & Th. 1.2 \cite{biregOdd} & \\
3 & 11 & 12 & 418 & Th. 2.3 \cite{Aurajo-Pardo2007} & 498 & Cor. 3 \cite{semiCubic2023} & \\
\rowcolor{gray!20}
3 & 11 & 13 & 694 & Th. 1.2 \cite{biregOdd} & 694 & Th. 1.2 \cite{biregOdd} & \\
3 & 11 & 14 & 844 & Th. 2.3 \cite{Aurajo-Pardo2007} & 1 386 & Cor. 4 \cite{semiCubic2023} & \\
3 & 11 & 15 & 1 398 & Moore bound & \underline{3 832} & Th. 2.1 \cite{monotoDn-cages} & \\
3 & 11 & 16 & 1 698 & Th. 2.3 \cite{Aurajo-Pardo2007} & 3 834 & Th. 1 \cite{semiCubic2023} & \\
3 & 11 & 17 & 2 806 & Moore bound & \underline{523 448} $\rightarrow$ 9 002 & Th. 2.1 \cite{monotoDn-cages} & Constr.~\ref{constr:odd} \triAT\\
3 & 11 & 18 & 3 404 & Th. 2.3 \cite{Aurajo-Pardo2007} & 523 450 & Th. 1 \cite{semiCubic2023} & \\
3 & 11 & 19 & 5 622 & Moore bound & \underline{2 095 688} & Th. 2.1 \cite{monotoDn-cages} & \\
3 & 11 & 20 & 6 818 & Th. 2.3 \cite{Aurajo-Pardo2007} & 2 095 690 & Th. 1 \cite{semiCubic2023} & \\
\rowcolor{gray!20}
3 & 12 & 5 & 37 & Cor. 2 \cite{ConstructionsBireg} & 37 & Cor. 2 \cite{ConstructionsBireg} & \\
\rowcolor{gray!20}
3 & 12 & 6 & 50 & Th. 2 \cite{Yang2003} & 50 & Th. 2 \cite{Yang2003} & \\
\rowcolor{gray!20}
3 & 12 & 7 & 85 & Cor. 2 \cite{ConstructionsBireg} & 85 & Cor. 2 \cite{ConstructionsBireg} & \\
\rowcolor{gray!20}
3 & 12 & 8 & 105 & Th. 2.3 \cite{Aurajo-Pardo2007} & 105 & Cor. 7 \cite{ConstructionsBireg} & \\
\rowcolor{gray!20}
3 & 12 & 9 & 181 & Th. 9 \cite{Dn-cages} & 181 & Th. 9 \cite{Dn-cages} & \\
3 & 12 & 10 & 221 & Th. 2.3 \cite{Aurajo-Pardo2007} & 274 $\rightarrow$ 265 & Th. 3 (i) \cite{ConstructionsBireg} & Th.~\ref{thm:genUpperBound}\\
\rowcolor{gray!20}
3 & 12 & 11 & 373 & Cor. 2 \cite{ConstructionsBireg} & 373 & Cor. 2 \cite{ConstructionsBireg} & \\
3 & 12 & 12 & 453 & Th. 2.3 \cite{Aurajo-Pardo2007} & 498 $\rightarrow$ 477 & Th. 3 (i) \cite{ConstructionsBireg} & Th.~\ref{thm:genUpperBound}\\
\rowcolor{gray!20}
3 & 12 & 13 & 757 & Th. 1.2 \cite{biregOdd} & 757 & Th. 1.2 \cite{biregOdd} & \\
3 & 12 & 14 & 917 & Th. 2.3 \cite{Aurajo-Pardo2007} & 1 386 & Cor. 4 \cite{semiCubic2023} & \\
3 & 12 & 15 & 1 525 & Moore bound & 2 474 $\rightarrow$ 2 450 & Th. 3 (iii) \cite{ConstructionsBireg} & Th.~\ref{thm:genUpperBound}\\
3 & 12 & 16 & 1 845 & Th. 2.3 \cite{Aurajo-Pardo2007} & 3 834 $\rightarrow$ 3 774 & Th. 3 (i) \cite{ConstructionsBireg} & Th.~\ref{thm:genUpperBound}\\
3 & 12 & 17 & 3 061 & Moore bound & 262 130 $\rightarrow$ 6 049 & Th. 3 (iii) \cite{ConstructionsBireg} & Constr.~\ref{constr:even} \triAT\\
3 & 12 & 18 & 3 701 & Th. 2.3 \cite{Aurajo-Pardo2007} & 523 450 $\rightarrow$ 10 171 & Th. 3 (i) \cite{ConstructionsBireg} & Th.~\ref{thm:genUpperBound}\\
3 & 12 & 19 & 6 133 & Moore bound & 1 048 562 $\rightarrow$ 17 221 & Th. 3 (iii) \cite{ConstructionsBireg} & Th.~\ref{thm:genUpperBound}\\
3 & 12 & 20 & 7 413 & Th. 2.3 \cite{Aurajo-Pardo2007} & 2 095 690 $\rightarrow$ 21 390 & Th. 3 (i) \cite{ConstructionsBireg} & Th.~\ref{thm:genUpperBound}\\
\rowcolor{gray!20}
3 & 13 & 5 & 40 & Th. 5 \cite{Dn-cages} & 40 & Th. 5 \cite{Dn-cages} & \\
\rowcolor{gray!20}
3 & 13 & 6 & 54 & Th. 1 \cite{Hanson1992} & 54 & Th. 1 \cite{Hanson1992} & \\
\rowcolor{gray!20}
3 & 13 & 7 & 92 & Th. 7 \cite{Dn-cages} & 92 & Th. 7 \cite{Dn-cages} & \\
3 & 13 & 8 & 116 & Th. 3.3 \cite{biregGirth8} & 120 & Th. 3.4 \cite{biregGirth8} & \\
\rowcolor{gray!20}
3 & 13 & 9 & 196 & Th. 9 \cite{Dn-cages} & 196 & Th. 9 \cite{Dn-cages} & \\
3 & 13 & 10 & 240 & Th. 2.3 \cite{Aurajo-Pardo2007} & 344 $\rightarrow$ 338 & Cor. 2 \cite{semiCubic2023} & Th.~\ref{thm:genUpperBound}\\
\rowcolor{gray!20}
3 & 13 & 11 & 404 & Th. 1.2 \cite{biregOdd} & 404 & Th. 1.2 \cite{biregOdd} & \\
3 & 13 & 12 & 490 & Th. 2.3 \cite{Aurajo-Pardo2007} & 624 $\rightarrow$ 606 & Cor. 3 \cite{semiCubic2023} & Th.~\ref{thm:genUpperBound}\\
\rowcolor{gray!20}
3 & 13 & 13 & 820 & Th. 1.2 \cite{biregOdd} & 820 & Th. 1.2 \cite{biregOdd} & \\
3 & 13 & 14 & 992 & Th. 2.3 \cite{Aurajo-Pardo2007} & 1 734 & Cor. 4 \cite{semiCubic2023} & \\
3 & 13 & 15 & 1 652 & Moore bound & \underline{4 792} $\rightarrow$ 3 070 & Th. 2.1 \cite{monotoDn-cages} & Th.~\ref{thm:genUpperBound}\\
3 & 13 & 16 & 1 994 & Th. 2.3 \cite{Aurajo-Pardo2007} & 4 794 $\rightarrow$ 4 734 & Th. 1 \cite{semiCubic2023} & Th.~\ref{thm:genUpperBound}\\
3 & 13 & 17 & 3 316 & Moore bound & \underline{654 312} $\rightarrow$ 10 820 & Th. 2.1 \cite{monotoDn-cages} & Th.~\ref{thm:genUpperBound}\\
\bottomrule
\end{tabular}
}
\caption{Lower and upper bound for $n\rmg$ with $r=3$ and $m$ from 11 till 13.}
\end{table}

\begin{table}[h]
\centering
\scalebox{0.95}{
\begin{tabular}{r r r | r l | r l l}
\toprule
$r$ & $m$ & $g$ & $n\rmg \geq$ & Source & $n\rmg \leq$ & Source & Source improvement \\ 
\midrule
3 & 13 & 18 & 4 000 & Th. 2.3 \cite{Aurajo-Pardo2007} & 654 314 $\rightarrow$ 12 734 & Th. 1 \cite{semiCubic2023} & Th.~\ref{thm:genUpperBound}\\
3 & 13 & 19 & 6 644 & Moore bound & \underline{2 619 612} $\rightarrow$ 21 548 & Th. 2.1 \cite{monotoDn-cages} & Th.~\ref{thm:genUpperBound}\\
3 & 13 & 20 & 8 010 & Th. 2.3 \cite{Aurajo-Pardo2007} & 2 619 614 $\rightarrow$ 26 766 & Th. 1 \cite{semiCubic2023} & Th.~\ref{thm:genUpperBound}\\
\rowcolor{gray!20}
3 & 14 & 5 & 43 & Th. 5 \cite{Dn-cages} & 43 & Th. 5 \cite{Dn-cages} & \\
\rowcolor{gray!20}
3 & 14 & 6 & 58 & Th. 2 \cite{Yang2003} & 58 & Th. 2 \cite{Yang2003} & \\
\rowcolor{gray!20}
3 & 14 & 7 & 99 & Th. 7 \cite{Dn-cages} & 99 & Th. 7 \cite{Dn-cages} & \\
3 & 14 & 8 & 124 & Th. 3.3 \cite{biregGirth8} & 127 & Cor. 3.3 \cite{biregGirth8} & \\
\rowcolor{gray!20}
3 & 14 & 9 & 211 & Th. 9 \cite{Dn-cages} & 211 & Th. 9 \cite{Dn-cages} & \\
3 & 14 & 10 & 257 & Th. 2.3 \cite{Aurajo-Pardo2007} & 342 & Cor. 2 \cite{semiCubic2023} & \\
\rowcolor{gray!20}
3 & 14 & 11 & 435 & Th. 1.2 \cite{biregOdd} & 435 & Th. 1.2 \cite{biregOdd} & \\
3 & 14 & 12 & 526 & Th. 2.3 \cite{Aurajo-Pardo2007} & 622 & Cor. 3 \cite{semiCubic2023} & \\
\rowcolor{gray!20}
3 & 14 & 13 & 883 & Th. 1.2 \cite{biregOdd} & 883 & Th. 1.2 \cite{biregOdd} & \\
3 & 14 & 14 & 1 065 & Th. 2.3 \cite{Aurajo-Pardo2007} & 1 732 & Cor. 4 \cite{semiCubic2023} & \\
3 & 14 & 15 & 1 779 & Moore bound & \underline{4 791} $\rightarrow$ 3 025 & Th. 2.1 \cite{monotoDn-cages} & Constr.~\ref{constr:even} \triAT\\
3 & 14 & 16 & 2 142 & Th. 2.3 \cite{Aurajo-Pardo2007} & 4 792 & Th. 1 \cite{semiCubic2023} & \\
3 & 14 & 17 & 3 571 & Moore bound & \underline{654 311} $\rightarrow$ 8 065 & Th. 2.1 \cite{monotoDn-cages} & Constr.~\ref{constr:even} \triAT\\
3 & 14 & 18 & 4 297 & Th. 2.3 \cite{Aurajo-Pardo2007} & 654 312 & Th. 1 \cite{semiCubic2023} & \\
3 & 14 & 19 & 7 155 & Moore bound & \underline{2 619 611} & Th. 2.1 \cite{monotoDn-cages} & \\
3 & 14 & 20 & 8 606 & Th. 2.3 \cite{Aurajo-Pardo2007} & 2 619 612 & Th. 1 \cite{semiCubic2023} & \\
\rowcolor{gray!20}
3 & 15 & 5 & 46 & Th. 5 \cite{Dn-cages} & 46 & Th. 5 \cite{Dn-cages} & \\
\rowcolor{gray!20}
3 & 15 & 6 & 62 & Th. 1 \cite{Hanson1992} & 62 & Th. 1 \cite{Hanson1992} & \\
\rowcolor{gray!20}
3 & 15 & 7 & 106 & Th. 7 \cite{Dn-cages} & 106 & Th. 7 \cite{Dn-cages} & \\
\rowcolor{gray!20}
3 & 15 & 8 & 130 & Th. 2.3 \cite{Aurajo-Pardo2007} & 130 & Cor. 7 \cite{ConstructionsBireg} & \\
\rowcolor{gray!20}
3 & 15 & 9 & 226 & Th. 9 \cite{Dn-cages} & 226 & Th. 9 \cite{Dn-cages} & \\
3 & 15 & 10 & 274 & Th. 2.3 \cite{Aurajo-Pardo2007} & 342 $\rightarrow$ 330 & Th. 3 (i) \cite{ConstructionsBireg} & Th.~\ref{thm:genUpperBound}\\
\rowcolor{gray!20}
3 & 15 & 11 & 466 & Th. 1.2 \cite{biregOdd} & 466 & Th. 1.2 \cite{biregOdd} & \\
3 & 15 & 12 & 562 & Th. 2.3 \cite{Aurajo-Pardo2007} & 622 $\rightarrow$ 594 & Th. 3 (i) \cite{ConstructionsBireg} & Th.~\ref{thm:genUpperBound}\\
\rowcolor{gray!20}
3 & 15 & 13 & 946 & Th. 1.2 \cite{biregOdd} & 946 & Th. 1.2 \cite{biregOdd} & \\
3 & 15 & 14 & 1 138 & Th. 2.3 \cite{Aurajo-Pardo2007} & 1 732 & Cor. 4 \cite{semiCubic2023} & \\
3 & 15 & 15 & 1 906 & Moore bound & 3 092 $\rightarrow$ 3 060 & Th. 3 (iii) \cite{ConstructionsBireg} & Th.~\ref{thm:genUpperBound}\\
3 & 15 & 16 & 2 290 & Th. 2.3 \cite{Aurajo-Pardo2007} & 4 792 $\rightarrow$ 4 712 & Th. 3 (i) \cite{ConstructionsBireg} & Th.~\ref{thm:genUpperBound}\\
3 & 15 & 17 & 3 826 & Moore bound & 327 662 $\rightarrow$ 10 800 & Th. 3 (iii) \cite{ConstructionsBireg} & Th.~\ref{thm:genUpperBound}\\
3 & 15 & 18 & 4 594 & Th. 2.3 \cite{Aurajo-Pardo2007} & 654 312 $\rightarrow$ 12 708 & Th. 3 (i) \cite{ConstructionsBireg} & Th.~\ref{thm:genUpperBound}\\
3 & 15 & 19 & 7 666 & Moore bound & 1 310 702 $\rightarrow$ 21 520 & Th. 3 (iii) \cite{ConstructionsBireg} & Th.~\ref{thm:genUpperBound}\\
3 & 15 & 20 & 9 202 & Th. 2.3 \cite{Aurajo-Pardo2007} & 2 619 612 $\rightarrow$ 26 728 & Th. 3 (i) \cite{ConstructionsBireg} & Th.~\ref{thm:genUpperBound}\\
\rowcolor{gray!20}
3 & 16 & 5 & 49 & Cor. 2 \cite{ConstructionsBireg} & 49 & Cor. 2 \cite{ConstructionsBireg} & \\
\rowcolor{gray!20}
3 & 16 & 6 & 66 & Th. 1 \cite{Hanson1992} & 66 & Th. 1 \cite{Hanson1992} & \\
\rowcolor{gray!20}
3 & 16 & 7 & 113 & Cor. 2 \cite{ConstructionsBireg} & 113 & Cor. 2 \cite{ConstructionsBireg} & \\
3 & 16 & 8 & 141 & Th. 3.3 \cite{biregGirth8} & 147 & Th. 3.4 \cite{biregGirth8} & \\
\rowcolor{gray!20}
3 & 16 & 9 & 241 & Th. 9 \cite{Dn-cages} & 241 & Th. 9 \cite{Dn-cages} & \\
3 & 16 & 10 & 292 & Th. 2.3 \cite{Aurajo-Pardo2007} & 412 $\rightarrow$ 404 & Cor. 2 \cite{semiCubic2023} & Th.~\ref{thm:genUpperBound}\\
\rowcolor{gray!20}
3 & 16 & 11 & 497 & Cor. 2 \cite{ConstructionsBireg} & 497 & Cor. 2 \cite{ConstructionsBireg} & \\
3 & 16 & 12 & 599 & Th. 2.3 \cite{Aurajo-Pardo2007} & 748 $\rightarrow$ 724 & Cor. 3 \cite{semiCubic2023} & Th.~\ref{thm:genUpperBound}\\
\rowcolor{gray!20}
3 & 16 & 13 & 1 009 & Th. 1.2 \cite{biregOdd} & 1 009 & Th. 1.2 \cite{biregOdd} & \\
\bottomrule
\end{tabular}
}
\caption{Lower and upper bound for $n\rmg$ with $r=3$ and $m$ from 13 till 16.}
\end{table}

\begin{table}[h]
\centering
\scalebox{0.95}{
\begin{tabular}{r r r | r l | r l l}
\toprule
$r$ & $m$ & $g$ & $n\rmg \geq$ & Source & $n\rmg \leq$ & Source & Source improvement \\ 
\midrule
3 & 16 & 14 & 1 212 & Th. 2.3 \cite{Aurajo-Pardo2007} & 2 080 & Cor. 4 \cite{semiCubic2023} & \\
3 & 16 & 15 & 2 033 & Moore bound & 3 833 $\rightarrow$ 3 457 & Th. 1 \cite{ConstructionsBireg} & Constr.~\ref{constr:even} \triAT\\
3 & 16 & 16 & 2 439 & Th. 2.3 \cite{Aurajo-Pardo2007} & 5 752 $\rightarrow$ 4 961 & Th. 1 \cite{semiCubic2023} & Constr.~\ref{constr:even} \triAT\\
3 & 16 & 17 & 4 081 & Moore bound & 523 449 $\rightarrow$ 8 065 & Th. 1 \cite{ConstructionsBireg} & Constr.~\ref{constr:even} \triAT\\
3 & 16 & 18 & 4 892 & Th. 2.3 \cite{Aurajo-Pardo2007} & 785 176 $\rightarrow$ 15 272 & Th. 1 \cite{semiCubic2023} & Th.~\ref{thm:genUpperBound}\\
3 & 16 & 19 & 8 177 & Moore bound & 2 095 689 $\rightarrow$ 25 848 & Th. 1 \cite{ConstructionsBireg} & Th.~\ref{thm:genUpperBound}\\
3 & 16 & 20 & 9 799 & Th. 2.3 \cite{Aurajo-Pardo2007} & 3 143 536 $\rightarrow$ 32 104 & Th. 1 \cite{semiCubic2023} & Th.~\ref{thm:genUpperBound}\\
\rowcolor{gray!20}
3 & 17 & 5 & 52 & Th. 5 \cite{Dn-cages} & 52 & Th. 5 \cite{Dn-cages} & \\
\rowcolor{gray!20}
3 & 17 & 6 & 70 & Th. 2 \cite{Yang2003} & 70 & Th. 2 \cite{Yang2003} & \\
\rowcolor{gray!20}
3 & 17 & 7 & 120 & Th. 7 \cite{Dn-cages} & 120 & Th. 7 \cite{Dn-cages} & \\
3 & 17 & 8 & 150 & Th. 3.3 \cite{biregGirth8} & 152 & Cor. 3.3 \cite{biregGirth8} & \\
\rowcolor{gray!20}
3 & 17 & 9 & 256 & Th. 9 \cite{Dn-cages} & 256 & Th. 9 \cite{Dn-cages} & \\
3 & 17 & 10 & 310 & Th. 2.3 \cite{Aurajo-Pardo2007} & 410 & Cor. 2 \cite{semiCubic2023} & \\
\rowcolor{gray!20}
3 & 17 & 11 & 528 & Th. 1.2 \cite{biregOdd} & 528 & Th. 1.2 \cite{biregOdd} & \\
3 & 17 & 12 & 636 & Th. 2.3 \cite{Aurajo-Pardo2007} & 746 & Cor. 3 \cite{semiCubic2023} & \\
\rowcolor{gray!20}
3 & 17 & 13 & 1 072 & Th. 1.2 \cite{biregOdd} & 1 072 & Th. 1.2 \cite{biregOdd} & \\
3 & 17 & 14 & 1 286 & Th. 2.3 \cite{Aurajo-Pardo2007} & 2 078 & Cor. 4 \cite{semiCubic2023} & \\
3 & 17 & 15 & 2 160 & Moore bound & \underline{5 748} & Th. 2.1 \cite{monotoDn-cages} & \\
3 & 17 & 16 & 2 588 & Th. 2.3 \cite{Aurajo-Pardo2007} & 5 750 & Th. 1 \cite{semiCubic2023} & \\
3 & 17 & 17 & 4 336 & Moore bound & \underline{785 172} & Th. 2.1 \cite{monotoDn-cages} & \\
3 & 17 & 18 & 5 190 & Th. 2.3 \cite{Aurajo-Pardo2007} & 785 174 & Th. 1 \cite{semiCubic2023} & \\
3 & 17 & 19 & 8 688 & Moore bound & \underline{3 143 532} & Th. 2.1 \cite{monotoDn-cages} & \\
3 & 17 & 20 & 10 396 & Th. 2.3 \cite{Aurajo-Pardo2007} & 3 143 534 & Th. 1 \cite{semiCubic2023} & \\
\rowcolor{gray!20}
3 & 18 & 5 & 55 & Th. 5 \cite{Dn-cages} & 55 & Th. 5 \cite{Dn-cages} & \\
\rowcolor{gray!20}
3 & 18 & 6 & 74 & Th. 2 \cite{Yang2003} & 74 & Th. 2 \cite{Yang2003} & \\
\rowcolor{gray!20}
3 & 18 & 7 & 127 & Th. 7 \cite{Dn-cages} & 127 & Th. 7 \cite{Dn-cages} & \\
\rowcolor{gray!20}
3 & 18 & 8 & 155 & Th. 2.3 \cite{Aurajo-Pardo2007} & 155 & Cor. 7 \cite{ConstructionsBireg} & \\
\rowcolor{gray!20}
3 & 18 & 9 & 271 & Th. 9 \cite{Dn-cages} & 271 & Th. 9 \cite{Dn-cages} & \\
3 & 18 & 10 & 327 & Th. 2.3 \cite{Aurajo-Pardo2007} & 410 $\rightarrow$ 395 & Th. 3 (i) \cite{ConstructionsBireg} & Th.~\ref{thm:genUpperBound}\\
\rowcolor{gray!20}
3 & 18 & 11 & 559 & Th. 1.2 \cite{biregOdd} & 559 & Th. 1.2 \cite{biregOdd} & \\
3 & 18 & 12 & 671 & Th. 2.3 \cite{Aurajo-Pardo2007} & 746 $\rightarrow$ 711 & Th. 3 (i) \cite{ConstructionsBireg} & Th.~\ref{thm:genUpperBound}\\
\rowcolor{gray!20}
3 & 18 & 13 & 1 135 & Th. 1.2 \cite{biregOdd} & 1 135 & Th. 1.2 \cite{biregOdd} & \\
3 & 18 & 14 & 1 359 & Th. 2.3 \cite{Aurajo-Pardo2007} & 2 078 & Cor. 4 \cite{semiCubic2023} & \\
3 & 18 & 15 & 2 287 & Moore bound & 3 710 $\rightarrow$ 3 670 & Th. 3 (iii) \cite{ConstructionsBireg} & Th.~\ref{thm:genUpperBound}\\
3 & 18 & 16 & 2 735 & Th. 2.3 \cite{Aurajo-Pardo2007} & 5 750 $\rightarrow$ 5 650 & Th. 3 (i) \cite{ConstructionsBireg} & Th.~\ref{thm:genUpperBound}\\
3 & 18 & 17 & 4 591 & Moore bound & 393 194 $\rightarrow$ 9 073 & Th. 3 (iii) \cite{ConstructionsBireg} & Constr.~\ref{constr:even} \triAT\\
3 & 18 & 18 & 5 487 & Th. 2.3 \cite{Aurajo-Pardo2007} & 785 174 $\rightarrow$ 15 245 & Th. 3 (i) \cite{ConstructionsBireg} & Th.~\ref{thm:genUpperBound}\\
3 & 18 & 19 & 9 199 & Moore bound & 1 572 842 $\rightarrow$ 25 819 & Th. 3 (iii) \cite{ConstructionsBireg} & Th.~\ref{thm:genUpperBound}\\
3 & 18 & 20 & 10 991 & Th. 2.3 \cite{Aurajo-Pardo2007} & 3 143 534 $\rightarrow$ 32 066 & Th. 3 (i) \cite{ConstructionsBireg} & Th.~\ref{thm:genUpperBound}\\
\bottomrule
\end{tabular}
}
\caption{Lower and upper bound for $n\rmg$ with $r=3$ and $m$ from 16 till 18.}
\end{table}

\begin{table}[h]
\centering
\scalebox{0.95}{
\begin{tabular}{r r r | r l | r l l}
\toprule
$r$ & $m$ & $g$ & $n\rmg \geq$ & Source & $n\rmg \leq$ & Source & Source improvement \\ 
\midrule
\rowcolor{gray!20}
4 & 5 & 5 & 21 & Th. 2 \cite{Hanson1992} & 21 & Th. 2 \cite{Hanson1992} & \\
\rowcolor{gray!20}
4 & 5 & 6 & 32 & Th. 2 \cite{Yang2003} & 32 & Th. 2 \cite{Yang2003} & \\
4 & 5 & 7 & 66 $\rightarrow$ 69 & Moore bound & \underline{120} $\rightarrow$ 96 & Th. 2.1 \cite{monotoDn-cages} & Constr.~\ref{constr:addEdge} \tetraAT\\
4 & 5 & 8 & 97 & Th. 2.3 \cite{Aurajo-Pardo2007} & 121 & Th. 3.1 \cite{semiRegCages} & \\
4 & 5 & 9 & 201 & Moore bound & \underline{1 125} $\rightarrow$ 360 & Th. 2.1 \cite{monotoDn-cages} & Constr.~\ref{constr:addEdge} \tetraAT\\
4 & 5 & 10 & 295 & Th. 2.3 \cite{Aurajo-Pardo2007} & 1 126 $\rightarrow$ 514 & Th. 3.1 \cite{semiRegCages} & Constr.~\ref{constr:odd} \tetraAT\\
4 & 5 & 11 & 606 & Moore bound & \underline{1 920} $\rightarrow$ 1 260 & Th. 2.1 \cite{monotoDn-cages} & Constr.~\ref{constr:addEdge} \tetraTwoAT\\
4 & 5 & 12 & 889 & Th. 2.3 \cite{Aurajo-Pardo2007} & 1 921 & Th. 3.1 \cite{semiRegCages} & \\
\rowcolor{gray!20}
4 & 6 & 5 & 25 & Cor. 2 \cite{ConstructionsBireg} & 25 & Cor. 2 \cite{ConstructionsBireg} & \\
\rowcolor{gray!20}
4 & 6 & 6 & 38 & Th. 2 \cite{Yang2003} & 38 & Th. 2 \cite{Yang2003} & \\
\rowcolor{gray!20}
4 & 6 & 7 & 79 & Cor. 2 \cite{ConstructionsBireg} & 79 & Cor. 2 \cite{ConstructionsBireg} & \\
4 & 6 & 8 & 114 & Th. 2.3 \cite{Aurajo-Pardo2007} & 138 & Th. 2.2 \cite{biregGirth8} & \\
4 & 6 & 9 & 241 & Moore bound & 383 & Th. 1 \cite{ConstructionsBireg} & \\
4 & 6 & 10 & 348 & Th. 2.3 \cite{Aurajo-Pardo2007} & 726 & Th. 1 \cite{ConstructionsBireg} & \\
\rowcolor{gray!20}
4 & 6 & 11 & 727 & Cor. 2 \cite{ConstructionsBireg} & 727 & Cor. 2 \cite{ConstructionsBireg} & \\
4 & 6 & 12 & 1 050 & Th. 2.3 \cite{Aurajo-Pardo2007} & 1 386 & Th. 2.2 \cite{biregGirth8} & \\
\rowcolor{gray!20}
4 & 7 & 5 & 29 & Th. 2 \cite{Hanson1992} & 29 & Th. 2 \cite{Hanson1992} & \\
\rowcolor{gray!20}
4 & 7 & 6 & 44 & Th. 3 \cite{Yang2003} & 44 & Th. 3 \cite{Yang2003} & \\
4 & 7 & 7 & 92 & Moore bound & \underline{142} & Th. 2.1 \cite{monotoDn-cages} & \\
4 & 7 & 8 & 131 & Th. 2.3 \cite{Aurajo-Pardo2007} & 143 & Th. 2.2 \cite{biregGirth8} & \\
4 & 7 & 9 & 281 & Moore bound & \underline{741} $\rightarrow$ 422 & Th. 2.1 \cite{monotoDn-cages} & Constr.~\ref{constr:odd} \tetraAT\\
4 & 7 & 10 & 401 & Th. 2.3 \cite{Aurajo-Pardo2007} & 742 & Th. 2.2 (i) \cite{Aurajo-Pardo2007} & \\
4 & 7 & 11 & 848 & Moore bound & \underline{1 402} & Th. 2.1 \cite{monotoDn-cages} & \\
4 & 7 & 12 & 1 211 & Th. 2.3 \cite{Aurajo-Pardo2007} & 1 403 & Th. 2.2 \cite{biregGirth8} & \\
\rowcolor{gray!20}
4 & 8 & 5 & 33 & Th. 2 \cite{Hanson1992} & 33 & Th. 2 \cite{Hanson1992} & \\
\rowcolor{gray!20}
4 & 8 & 6 & 50 & Th. 2 \cite{Yang2003} & 50 & Th. 2 \cite{Yang2003} & \\
\rowcolor{gray!20}
4 & 8 & 7 & 105 & Th. 1.2 \cite{biregOdd} & 105 & Th. 1.2 \cite{biregOdd} & \\
4 & 8 & 8 & 148 & Th. 2.3 \cite{Aurajo-Pardo2007} & 151 & Cor. 6 (ii) \cite{ConstructionsBireg} & \\
4 & 8 & 9 & 321 & Moore bound & 510 $\rightarrow$ 505 & Th. 3 (iii) \cite{ConstructionsBireg} & Constr.~\ref{constr:even} \tetraAT\\
4 & 8 & 10 & 454 & Th. 2.3 \cite{Aurajo-Pardo2007} & 766 & Th. 3 (i) \cite{ConstructionsBireg} & \\
4 & 8 & 11 & 969 & Moore bound & \underline{1 428} & Th. 2.1 \cite{monotoDn-cages} & \\
4 & 8 & 12 & 1 372 & Th. 2.3 \cite{Aurajo-Pardo2007} & 1 429 & Cor. 6 (ii) \cite{ConstructionsBireg} & \\
\rowcolor{gray!20}
4 & 9 & 5 & 37 & Th. 2 \cite{Hanson1992} & 37 & Th. 2 \cite{Hanson1992} & \\
\rowcolor{gray!20}
4 & 9 & 6 & 56 & Th. 2 \cite{Yang2003} & 56 & Th. 2 \cite{Yang2003} & \\
4 & 9 & 7 & 118 & Moore bound & \underline{200} $\rightarrow$ 191 & Th. 2.1 \cite{monotoDn-cages} & Th.~\ref{thm:genUpperBound}\\
4 & 9 & 8 & 165 & Th. 2.3 \cite{Aurajo-Pardo2007} & 201 & Th. 2.2 \cite{biregGirth8} & \\
4 & 9 & 9 & 361 & Moore bound & \underline{1 091} $\rightarrow$ 809 & Th. 2.1 \cite{monotoDn-cages} & Th.~\ref{thm:genUpperBound}\\
4 & 9 & 10 & 507 & Th. 2.3 \cite{Aurajo-Pardo2007} & 1 092 $\rightarrow$ 962 & Th. 2.2 (ii) \cite{Aurajo-Pardo2007} & Constr.~\ref{constr:odd} \tetraTwoAT\\
4 & 9 & 11 & 1 090 & Moore bound & \underline{2 060} & Th. 2.1 \cite{monotoDn-cages} & \\
4 & 9 & 12 & 1 533 & Th. 2.3 \cite{Aurajo-Pardo2007} & 2 061 & Th. 2.2 \cite{biregGirth8} & \\
\rowcolor{gray!20}
4 & 10 & 5 & 41 & Th. 2 \cite{Hanson1992} & 41 & Th. 2 \cite{Hanson1992} & \\
\rowcolor{gray!20}
4 & 10 & 6 & 62 & Th. 2 \cite{Yang2003} & 62 & Th. 2 \cite{Yang2003} & \\
\rowcolor{gray!20}
4 & 10 & 7 & 131 & Th. 1.2 \cite{biregOdd} & 131 & Th. 1.2 \cite{biregOdd} & \\
4 & 10 & 8 & 182 & Th. 2.3 \cite{Aurajo-Pardo2007} & 206 & Th. 2.2 \cite{biregGirth8} & \\
\bottomrule
\end{tabular}
}
\caption{Lower and upper bound for $n\rmg$ with $r=4$ and $m$ from 5 till 10.}
\end{table}

\begin{table}[h]
\centering
\scalebox{0.95}{
\begin{tabular}{r r r | r l | r l l}
\toprule
$r$ & $m$ & $g$ & $n\rmg \geq$ & Source & $n\rmg \leq$ & Source & Source improvement \\ 
\midrule
4 & 10 & 9 & 401 & Moore bound & \underline{1 099} $\rightarrow$ 961 & Th. 2.1 \cite{monotoDn-cages} & Constr.~\ref{constr:even} \tetraTwoAT\\
4 & 10 & 10 & 560 & Th. 2.3 \cite{Aurajo-Pardo2007} & 1 100 & Th. 2.2 (i) \cite{Aurajo-Pardo2007} & \\
4 & 10 & 11 & 1 211 & Moore bound & \underline{2 077} & Th. 2.1 \cite{monotoDn-cages} & \\
4 & 10 & 12 & 1 694 & Th. 2.3 \cite{Aurajo-Pardo2007} & 2 078 & Th. 2.2 \cite{biregGirth8} & \\
\rowcolor{gray!20}
4 & 11 & 5 & 45 & Th. 2 \cite{Hanson1992} & 45 & Th. 2 \cite{Hanson1992} & \\
\rowcolor{gray!20}
4 & 11 & 6 & 68 & Th. 3 \cite{Yang2003} & 68 & Th. 3 \cite{Yang2003} & \\
4 & 11 & 7 & 144 & Moore bound & 118 162 $\rightarrow$ 242 &  \cite{Chartrand1981} & Constr.~\ref{constr:odd} \tetraAT\\
4 & 11 & 8 & 199 & Th. 2.3 \cite{Aurajo-Pardo2007} & 236 252 $\rightarrow$ 302 &  \cite{Chartrand1981} & Constr.~\ref{constr:odd} \tetraET\\
4 & 11 & 9 & 441 & Moore bound & 9 566 194 $\rightarrow$ 1 094 &  \cite{Chartrand1981} & Constr.~\ref{constr:odd} \tetraTwoAT\\
4 & 11 & 10 & 613 & Th. 2.3 \cite{Aurajo-Pardo2007} & 19 132 184 $\rightarrow$ 1 322 &  \cite{Chartrand1981} & Constr.~\ref{constr:odd} \tetraTwoAT\\
4 & 11 & 11 & 1 332 & Moore bound & 774 841 705 &  \cite{Chartrand1981} & \\
4 & 11 & 12 & 1 855 & Th. 2.3 \cite{Aurajo-Pardo2007} & 1 549 682 436 &  \cite{Chartrand1981} & \\
\rowcolor{gray!20}
4 & 12 & 5 & 49 & Cor. 2 \cite{ConstructionsBireg} & 49 & Cor. 2 \cite{ConstructionsBireg} & \\
\rowcolor{gray!20}
4 & 12 & 6 & 74 & Th. 2 \cite{Yang2003} & 74 & Th. 2 \cite{Yang2003} & \\
\rowcolor{gray!20}
4 & 12 & 7 & 157 & Cor. 2 \cite{ConstructionsBireg} & 157 & Cor. 2 \cite{ConstructionsBireg} & \\
4 & 12 & 8 & 216 & Th. 2.3 \cite{Aurajo-Pardo2007} & 222 & Cor. 6 (ii) \cite{ConstructionsBireg} & \\
4 & 12 & 9 & 481 & Moore bound & 764 & Th. 3 (iii) \cite{ConstructionsBireg} & \\
4 & 12 & 10 & 666 & Th. 2.3 \cite{Aurajo-Pardo2007} & 1 148 & Th. 3 (i) \cite{ConstructionsBireg} & \\
\rowcolor{gray!20}
4 & 12 & 11 & 1 453 & Cor. 2 \cite{ConstructionsBireg} & 1 453 & Cor. 2 \cite{ConstructionsBireg} & \\
4 & 12 & 12 & 2 016 & Th. 2.3 \cite{Aurajo-Pardo2007} & 2 130 & Cor. 6 (ii) \cite{ConstructionsBireg} & \\
\rowcolor{gray!20}
4 & 13 & 5 & 53 & Th. 2 \cite{Hanson1992} & 53 & Th. 2 \cite{Hanson1992} & \\
\rowcolor{gray!20}
4 & 13 & 6 & 80 & Th. 3 \cite{Yang2003} & 80 & Th. 3 \cite{Yang2003} & \\
4 & 13 & 7 & 170 & Moore bound & \underline{268} & Th. 2.1 \cite{monotoDn-cages} & \\
4 & 13 & 8 & 233 & Th. 2.3 \cite{Aurajo-Pardo2007} & 269 & Th. 2.2 \cite{biregGirth8} & \\
4 & 13 & 9 & 521 & Moore bound & \underline{1 457} $\rightarrow$ 1 351 & Th. 2.1 \cite{monotoDn-cages} & Th.~\ref{thm:genUpperBound}\\
4 & 13 & 10 & 719 & Th. 2.3 \cite{Aurajo-Pardo2007} & 1 458 & Th. 2.2 (i) \cite{Aurajo-Pardo2007} & \\
4 & 13 & 11 & 1 574 & Moore bound & \underline{2 752} & Th. 2.1 \cite{monotoDn-cages} & \\
4 & 13 & 12 & 2 177 & Th. 2.3 \cite{Aurajo-Pardo2007} & 2 753 & Th. 2.2 \cite{biregGirth8} & \\
\bottomrule
\end{tabular}
}
\caption{Lower and upper bound for $n\rmg$ with $r=4$ and $m$ from 10 till 13.}
\end{table}

\begin{table}[h]
\centering
\scalebox{0.95}{
\begin{tabular}{r r r | r l | r l l}
\toprule
$r$ & $m$ & $g$ & $n\rmg \geq$ & Source & $n\rmg \leq$ & Source & Source improvement \\ 
\midrule
\rowcolor{gray!20}
5 & 6 & 5 & 31 & Th. 1.2 \cite{biregOdd} & 31 & Th. 1.2 \cite{biregOdd} & \\
\rowcolor{gray!20}
5 & 6 & 6 & 50 & Th. 2 \cite{Yang2003} & 50 & Th. 2 \cite{Yang2003} & \\
5 & 6 & 7 & 127 & Moore bound & \underline{240} & Th. 2.1 \cite{monotoDn-cages} & \\
5 & 6 & 8 & 196 & Th. 2.3 \cite{Aurajo-Pardo2007} & 241 & Th. 3.1 \cite{semiRegCages} & \\
5 & 6 & 9 & 511 & Moore bound & \underline{5 998} $\rightarrow$ \underline{2 556} & Th. 2.1 \cite{monotoDn-cages} & \\
5 & 6 & 10 & 788 & Th. 2.3 \cite{Aurajo-Pardo2007} & \underline{5 999} $\rightarrow$ 2 557 & Th. 2.1 \cite{monotoDn-cages} & Th.~\ref{thm:genUpperBound}\\
5 & 6 & 11 & 2 047 & Moore bound & \underline{6 000} $\rightarrow$ 5 351 & Th. 2.1 \cite{monotoDn-cages} & Th.~\ref{thm:genUpperBound}\\
5 & 6 & 12 & 3 156 & Th. 2.3 \cite{Aurajo-Pardo2007} & 6 001 $\rightarrow$ 5 414 & Th. 3.1 \cite{semiRegCages} & Th.~\ref{thm:genUpperBound}\\
\rowcolor{gray!20}
5 & 7 & 5 & 36 & Th. 1.2 \cite{biregOdd} & 36 & Th. 1.2 \cite{biregOdd} & \\
\rowcolor{gray!20}
5 & 7 & 6 & 58 & Th. 3 \cite{Yang2003} & 58 & Th. 3 \cite{Yang2003} & \\
5 & 7 & 7 & 148 & Moore bound & 822 $\rightarrow$ 290 &  \cite{Chartrand1981} & Constr.~\ref{constr:odd} \pentaAT\\
5 & 7 & 8 & 222 & Th. 2.3 \cite{Aurajo-Pardo2007} & 842 $\rightarrow$ 386 &  \cite{Chartrand1981} & Constr.~\ref{constr:odd} \pentaAT\\
5 & 7 & 9 & 596 & Moore bound & 34 218 $\rightarrow$ \underline{3 852} &  \cite{Chartrand1981} & \\
5 & 7 & 10 & 894 & Th. 2.3 \cite{Aurajo-Pardo2007} & 34 222 $\rightarrow$ 3 854 &  \cite{Chartrand1981} & Th.~\ref{thm:genUpperBound}\\
5 & 7 & 11 & 2 388 & Moore bound & 35 614 $\rightarrow$ 8 040 &  \cite{Chartrand1981} & Th.~\ref{thm:genUpperBound}\\
5 & 7 & 12 & 3 582 & Th. 2.3 \cite{Aurajo-Pardo2007} & 35 658 $\rightarrow$ 8 144 &  \cite{Chartrand1981} & Th.~\ref{thm:genUpperBound}\\
\rowcolor{gray!20}
5 & 8 & 5 & 41 & Cor. 2 \cite{ConstructionsBireg} & 41 & Cor. 2 \cite{ConstructionsBireg} & \\
\rowcolor{gray!20}
5 & 8 & 6 & 66 & Th. 2 \cite{Yang2003} & 66 & Th. 2 \cite{Yang2003} & \\
\rowcolor{gray!20}
5 & 8 & 7 & 169 & Cor. 2 \cite{ConstructionsBireg} & 169 & Cor. 2 \cite{ConstructionsBireg} & \\
5 & 8 & 8 & 248 & Th. 2.3 \cite{Aurajo-Pardo2007} & 308 & Th. 2.2 \cite{biregGirth8} & \\
5 & 8 & 9 & 681 & Moore bound & 1 295 & Th. 1 \cite{ConstructionsBireg} & \\
5 & 8 & 10 & 1 000 & Th. 2.3 \cite{Aurajo-Pardo2007} & 2 540 & Th. 2.2 (ii) \cite{Aurajo-Pardo2007} & \\
\rowcolor{gray!20}
5 & 8 & 11 & 2 729 & Cor. 2 \cite{ConstructionsBireg} & 2 729 & Cor. 2 \cite{ConstructionsBireg} & \\
5 & 8 & 12 & 4 008 & Th. 2.3 \cite{Aurajo-Pardo2007} & 5 328 & Th. 2.2 \cite{biregGirth8} & \\
\rowcolor{gray!20}
5 & 9 & 5 & 46 & Th. 1.2 \cite{biregOdd} & 46 & Th. 1.2 \cite{biregOdd} & \\
\rowcolor{gray!20}
5 & 9 & 6 & 74 & Th. 2 \cite{Yang2003} & 74 & Th. 2 \cite{Yang2003} & \\
5 & 9 & 7 & 190 & Moore bound & \underline{312} $\rightarrow$ 290 & Th. 2.1 \cite{monotoDn-cages} & Constr.~\ref{constr:odd} \pentaAT\\
5 & 9 & 8 & 274 & Th. 2.3 \cite{Aurajo-Pardo2007} & 314 & Th. 2.2 \cite{biregGirth8} & \\
5 & 9 & 9 & 766 & Moore bound & \underline{2 548} & Th. 2.1 \cite{monotoDn-cages} & \\
5 & 9 & 10 & 1 106 & Th. 2.3 \cite{Aurajo-Pardo2007} & 2 550 & Th. 2.2 (i) \cite{Aurajo-Pardo2007} & \\
5 & 9 & 11 & 3 070 & Moore bound & \underline{5 352} & Th. 2.1 \cite{monotoDn-cages} & \\
5 & 9 & 12 & 4 434 & Th. 2.3 \cite{Aurajo-Pardo2007} & 5 354 & Th. 2.2 \cite{biregGirth8} & \\
\rowcolor{gray!20}
5 & 10 & 5 & 51 & Th. 1.2 \cite{biregOdd} & 51 & Th. 1.2 \cite{biregOdd} & \\
\rowcolor{gray!20}
5 & 10 & 6 & 82 & Th. 2 \cite{Yang2003} & 82 & Th. 2 \cite{Yang2003} & \\
5 & 10 & 7 & 211 & Moore bound & 302 & Th. 3 (iii) \cite{ConstructionsBireg} & \\
5 & 10 & 8 & 300 & Th. 2.3 \cite{Aurajo-Pardo2007} & 324 & Cor. 6 (ii) \cite{ConstructionsBireg} & \\
5 & 10 & 9 & 851 & Moore bound & 2 586 & Th. 3 (iii) \cite{ConstructionsBireg} & \\
5 & 10 & 10 & 1 212 & Th. 2.3 \cite{Aurajo-Pardo2007} & 2 590 & Th. 3 (i) \cite{ConstructionsBireg} & \\
5 & 10 & 11 & 3 411 & Moore bound & 5 374 & Th. 3 (iii) \cite{ConstructionsBireg} & \\
5 & 10 & 12 & 4 860 & Th. 2.3 \cite{Aurajo-Pardo2007} & 5 458 & Th. 3 (i) \cite{ConstructionsBireg} & \\
\rowcolor{gray!20}
5 & 11 & 5 & 56 & Th. 1.2 \cite{biregOdd} & 56 & Th. 1.2 \cite{biregOdd} & \\
\rowcolor{gray!20}
5 & 11 & 6 & 90 & Th. 3 \cite{Yang2003} & 90 & Th. 3 \cite{Yang2003} & \\
5 & 11 & 7 & 232 & Moore bound & 118 250 $\rightarrow$ 322 &  \cite{Chartrand1981} & Constr.~\ref{constr:odd} \pentaAT\\
5 & 11 & 8 & 326 & Th. 2.3 \cite{Aurajo-Pardo2007} & 236 342 &  \cite{Chartrand1981} & \\
\bottomrule
\end{tabular}
}
\caption{Lower and upper bound for $n\rmg$ with $r=5$ and $m$ from 6 till 11.}
\end{table}

\begin{table}[h]
\centering
\scalebox{0.95}{
\begin{tabular}{r r r | r l | r l l}
\toprule
$r$ & $m$ & $g$ & $n\rmg \geq$ & Source & $n\rmg \leq$ & Source & Source improvement \\ 
\midrule
5 & 11 & 9 & 936 & Moore bound & 9 567 232 &  \cite{Chartrand1981} & \\
5 & 11 & 10 & 1 318 & Th. 2.3 \cite{Aurajo-Pardo2007} & 19 133 096 &  \cite{Chartrand1981} & \\
5 & 11 & 11 & 3 752 & Moore bound & 774 843 666 &  \cite{Chartrand1981} & \\
5 & 11 & 12 & 5 286 & Th. 2.3 \cite{Aurajo-Pardo2007} & 1 549 684 438 &  \cite{Chartrand1981} & \\
\rowcolor{gray!20}
5 & 12 & 5 & 61 & Th. 1.2 \cite{biregOdd} & 61 & Th. 1.2 \cite{biregOdd} & \\
\rowcolor{gray!20}
5 & 12 & 6 & 98 & Th. 2 \cite{Yang2003} & 98 & Th. 2 \cite{Yang2003} & \\
5 & 12 & 7 & 253 & Moore bound & \underline{451} $\rightarrow$ 385 & Th. 2.1 \cite{monotoDn-cages} & Constr.~\ref{constr:even} \pentaAT\\
5 & 12 & 8 & 352 & Th. 2.3 \cite{Aurajo-Pardo2007} & 452 & Th. 2.2 \cite{biregGirth8} & \\
5 & 12 & 9 & 1 021 & Moore bound & \underline{3 793} & Th. 2.1 \cite{monotoDn-cages} & \\
5 & 12 & 10 & 1 424 & Th. 2.3 \cite{Aurajo-Pardo2007} & 3 794 & Th. 2.2 (ii) \cite{Aurajo-Pardo2007} & \\
5 & 12 & 11 & 4 093 & Moore bound & \underline{7 951} & Th. 2.1 \cite{monotoDn-cages} & \\
5 & 12 & 12 & 5 712 & Th. 2.3 \cite{Aurajo-Pardo2007} & 7 952 & Th. 2.2 \cite{biregGirth8} & \\
\rowcolor{gray!20}
5 & 13 & 5 & 66 & Th. 1.2 \cite{biregOdd} & 66 & Th. 1.2 \cite{biregOdd} & \\
\rowcolor{gray!20}
5 & 13 & 6 & 106 & Th. 3 \cite{Yang2003} & 106 & Th. 3 \cite{Yang2003} & \\
5 & 13 & 7 & 274 & Moore bound & \underline{456} $\rightarrow$ 386 & Th. 2.1 \cite{monotoDn-cages} & Constr.~\ref{constr:odd} \pentaAT\\
5 & 13 & 8 & 378 & Th. 2.3 \cite{Aurajo-Pardo2007} & 458 & Th. 2.2 \cite{biregGirth8} & \\
5 & 13 & 9 & 1 106 & Moore bound & \underline{3 802} & Th. 2.1 \cite{monotoDn-cages} & \\
5 & 13 & 10 & 1 530 & Th. 2.3 \cite{Aurajo-Pardo2007} & 3 804 & Th. 2.2 (i) \cite{Aurajo-Pardo2007} & \\
5 & 13 & 11 & 4 434 & Moore bound & \underline{7 976} & Th. 2.1 \cite{monotoDn-cages} & \\
5 & 13 & 12 & 6 138 & Th. 2.3 \cite{Aurajo-Pardo2007} & 7 978 & Th. 2.2 \cite{biregGirth8} & \\
\bottomrule
\end{tabular}
}
\caption{Lower and upper bound for $n\rmg$ with $r=5$ and $m$ from 11 till 13.}
\end{table}

\end{document}